\newtheorem{lemma}{Lemma}[section]
\newtheorem{theorem}[lemma]{Theorem}
\newtheorem{definition}[lemma]{Definition}
\DeclareMathOperator{\optw}{{\bf tw}}
\newcommand{\tw}[1]{{\optw({#1})}}
\newcommand{\NP}{\ensuremath{\mathrm{\sf NP}}}
\newcommand{\FPT}{\ensuremath{\mathrm{\sf FPT}}}
\newcommand{\bigO}[1]{\mathcal O(#1)}
\newcommand{\defterm}[1]{\emph{#1}}
\begin{document}

\title{Contracting planar graphs to contractions of triangulations%
\thanks{This research was done while the last two authors were visiting the  Dèpartement d'Informatique of Université Libre de Bruxelles in January 2010.
The authors thank Samuel Fiorini for his kind support. The work is also supported by the Actions de 
Recherche Concertées (ARC) fund of the Communauté française de Belgique,
by the project ``Kapodistrias'' (A${\rm \Pi}$ 02839/28.07.2008) of the National and
Kapodistrian University of Athens and by EPSRC Grant EP/G043434/1.
}}

\author{Marcin Kami\'nski\thanks{D\'epartement d'Informatique,
Universit\'e Libre de Bruxelles,
Boulevard du Triomphe CP212, B-1050 Brussels, Belgium.
Email: {\tt marcin.kaminski@ulb.ac.be}}
\and Dani\"el Paulusma\thanks{School of Engineering and Computing Sciences, 
Science Laboratories, South Road, Durham DH1 3LE, England
Email: {\tt daniel.paulusma@durham.ac.uk}}
\and Dimitrios M. Thilikos\thanks{Department of Mathematics,
National and Kapodistrian University of Athens,
Panepistimioupolis, GR15784 Athens, Greece. Email: {\tt sedthilk@math.uoa.gr}}
}             

\date{}
\maketitle

\begin{abstract}
For every graph $H$, there exists a poly\-no\-mial-time algorithm deciding if a planar input graph $G$ can be contracted to~$H$. However, the degree of the polynomial depends on the size of $H$. 
In this paper, we identify a class of graphs $\cal C$ such that for every $H \in \cal C$, there exists an algorithm deciding in time $f(|V(H)|) \cdot |V(G)|^{\bigO{1}}$ whether a planar graph $G$ can be contracted to~$H$. (The function $f(\cdot)$ does not depend on $G$.)  
The class $\cal C$ is the closure of planar triangulated graphs under taking of contractions. In fact, we prove that a graph $H \in \cal C$ if and only if there exists a constant $c_H$ such that if the tree-width of a graph is at least $c_H$, it contains $H$ as a contraction. We also provide a characterization of $\cal C$ in terms of minimal forbidden contractions.

\medskip
\noindent
{\bf Keywords.} planar graph, dual graph, contraction, topological minor, fixed parameter tractable
\end{abstract}

\section{Introduction}

We consider simple graphs without loops and multiple edges. For a graph $G$, let $V(G)$ be its vertex set and $E(G)$ its edge set. For notions not defined here, we refer the reader to the monograph \cite{Diestel}.

\subsection{Planar graphs} 

All graphs in this paper are planar. Plane graphs are always assumed to be drawn on the unit sphere and their edges are arbitrary polygonal arcs (not necessarily straight line segments). 

\medskip
\noindent{\it Embeddings.} In this work, we only need to distinguish between essentially different embeddings of a planar graph. This motivates the following definition. 

Two plane graphs $G$ and $H$ are \defterm{combinatorially equivalent} ($G \simeq H$) if there exists a homeomorphism of the unit sphere (in which they are embedded) which transforms one into the other. The relation of being combinatorially equivalent is reflexive, symmetric and transitive, and thus an equivalence relation. Let $\mathcal G$ be the class of all plane graphs isomorphic to a planar graph $G$ and let us consider the quotient set ${\cal G} / \simeq$. The equivalence classes (i.e., the elements of the quotient set) can be thought of as \emph{embeddings}. In fact, we will work with embeddings but for simplicity, we will pick a plane graph representative for each embedding.

\medskip
\noindent{\it Dual.} The dual of a plane graph $G$ will be denoted by $G^*$. Note that there is a one-to-one correspondence between the edges of $G$ and the edges of $G^*$. We keep the convention that $e^*$ is the edge of $G^*$ corresponding to edge $e$ of $G$.

\medskip
\noindent{\it Triangulation.} A planar graphs is called \emph{triangulated} if it has an embedding in which every face is incident with exactly three vertices. Let us recall two useful facts related to planar 3-connected graphs that we will need later. 

\begin{lemma}\label{lem-triangulated-are-3-conn}
Triangulated planar graphs are 3-connected.
\end{lemma}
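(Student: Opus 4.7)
The plan is to argue by contradiction, assuming $G$ is a triangulated planar graph that is not 3-connected, and deriving a contradiction from the structure a triangulated embedding forces on neighborhoods. I will fix a triangulated embedding of $G$ (one exists by definition) and exploit the following key observation: for every vertex $x$, going around $x$ in the rotation system we see faces $xy_1y_2, xy_2y_3, \ldots, xy_{\dgr(x)}y_1$, all of which are triangles; hence consecutive neighbors of $x$ are joined by edges of $G$, so the neighbors of $x$ form a cycle $C_x$ in $G-x$ (I will note separately that small cases with $|V(G)| \leq 3$ are trivial, and that in a connected triangulated graph every vertex has degree at least $3$, so $C_x$ is a genuine cycle).

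Given that observation, the cut-vertex case is easy. Suppose $v$ is a cut vertex. Any other vertex $u$ lies on some path to $v$ in $G$; the penultimate vertex of that path is in $N(v)$, so $u$ is connected in $G-v$ to $C_v$. Since $C_v$ is itself connected, $G-v$ is connected, a contradiction.

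The main obstacle is the $2$-cut case, which I expect to need the most care. Suppose $\{u,v\}$ separates $G$ into parts $A$ and $B$. First I would observe that both $u$ and $v$ must have neighbors in each of $A$ and $B$; otherwise the other vertex alone would already be a cut vertex, contradicting the previous case. Now consider the cyclic sequence of neighbors of $u$ arising from the rotation at $u$. At most one element of this cyclic sequence is $v$; deleting it (if present) leaves a \emph{path} of neighbors in which consecutive vertices are still joined by edges of $G$ (these edges come from triangular faces incident to $u$). This path contains vertices of $A$ and of $B$, so somewhere along it two consecutive neighbors $v_i \in A$ and $v_{i+1} \in B$ appear, yielding an edge $v_iv_{i+1}$ between $A$ and $B$ that avoids $\{u,v\}$. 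This contradicts $\{u,v\}$ being a cut, completing the proof. The delicate point, and where I would spend most of the write-up, is the case analysis around whether $v \in N(u)$, and ensuring the "path of neighbors" still straddles $A$ and $B$ after removing $v$.
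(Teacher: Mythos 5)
Your proof is correct. Note, though, that the paper does not actually prove this lemma: it simply cites Mohar--Thomassen (Lemma~2.3.3 there), so there is no in-paper argument to compare against; what you have written is essentially the standard textbook proof that the citation points to. The key observation that the rotation at a vertex $x$ of a triangulation forces consecutive neighbours to be adjacent (so that $N(x)$ spans a connected closed walk in $G-x$) is exactly the right tool, the cut-vertex case is handled correctly, and the $2$-cut case --- deleting the at most one occurrence of $v$ from the cyclic neighbour sequence of $u$ and finding an $A$--$B$ edge among the remaining consecutive pairs --- is sound, since all neighbours of $u$ other than $v$ lie in $A\cup B$ and $u$ has neighbours in both sides (else the other cut vertex would already be a cut vertex by itself). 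Two small points to make explicit in a full write-up: the statement tacitly assumes $|V(G)|\geq 4$ (the triangle $K_3$ is triangulated but only $2$-connected, so it must be excluded by convention), and one should note that a triangulated plane graph is automatically connected with minimum degree at least $3$, which is what guarantees that the rotation argument applies at every vertex.
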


\begin{lemma}\label{lem-3-conn-unique}
A 3-connected planar graph has a unique embedding.
\end{lemma}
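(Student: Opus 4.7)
The plan is to invoke Whitney's classical characterization of facial cycles in $3$-connected planar graphs, which describes them in a purely combinatorial way, independent of any particular embedding. Concretely, for a $3$-connected planar graph $G$ I would prove that a cycle $C \subseteq G$ bounds a face in some (equivalently, every) embedding if and only if $C$ is an induced cycle such that $G - V(C)$ is connected; such cycles are usually called \emph{peripheral}. Once this combinatorial characterization is available, the set of face boundaries of any embedding of $G$ depends only on $G$, so all embeddings agree on which cycles bound faces and thus are combinatorially equivalent in the sense of $\simeq$.

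The first direction — every facial cycle is peripheral — is the easier one. If a face boundary $C$ had a chord $e$, then the Jordan curve traced by $C$ on the sphere would be crossed by the arc representing $e$, contradicting planarity; hence $C$ is induced. If $G - V(C)$ were disconnected, one combines the face condition with a short case analysis on how the components of $G - V(C)$ can attach to $C$, and uses that $|V(C)| \geq 3$ together with $3$-connectivity to extract a vertex cut of size at most $2$, a contradiction.

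The main obstacle is the reverse direction: every peripheral cycle $C$ is a face boundary in every fixed embedding. Assume towards a contradiction that in some embedding $C$ is not facial. The cycle $C$, being a simple closed curve, splits the sphere into two open disks $D_1,D_2$; since $C$ is not a face, each $D_i$ contains at least one vertex of $G$ in its interior (here $3$-connectivity is used to rule out degenerate attachments and to ensure that $G$ has enough vertices off $C$ distributed on both sides). But then any path in $G - V(C)$ from a vertex in $D_1$ to a vertex in $D_2$ would have to cross the Jordan curve $C$ in the embedding, contradicting the connectedness of $G - V(C)$ guaranteed by the peripheral hypothesis.

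Having characterized the facial cycles intrinsically, uniqueness of the embedding follows: any two embeddings of $G$ have the same family of face boundaries, so the rotation of edges around every vertex, and the incidences between vertices, edges, and faces, coincide up to a homeomorphism of the sphere — which is exactly the content of $G \simeq G'$ in the paper's definition. Note that Lemma~\ref{lem-triangulated-are-3-conn}, stated just before, is not needed for this proof but will be used together with Lemma~\ref{lem-3-conn-unique} later to assert that every triangulated planar graph has a unique embedding.
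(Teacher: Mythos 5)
The paper offers no proof of this lemma at all: it is Whitney's classical uniqueness theorem, and the authors simply point to Lemma~2.5.1 of \cite{MoharThomassen}. What you propose is precisely the standard proof behind that citation --- characterize the facial cycles of a $3$-connected plane graph intrinsically as the peripheral (induced and non-separating) cycles, then conclude that all embeddings share the same face boundaries --- so this is not a different route but a reconstruction of the omitted one. Your treatment of the direction ``peripheral $\Rightarrow$ facial'' is fine (though note it is chordlessness, not $3$-connectivity, that lets you pass from ``each region meets the drawing'' to ``each region contains a vertex of $G-V(C)$'').

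There is, however, one genuinely wrong step. A chord $e$ of a facial cycle $C$ does \emph{not} cross the Jordan curve $C$: it is drawn entirely inside the closed complementary region, so ``contradicting planarity'' proves nothing. (In $K_4$ minus an edge, embedded with the $4$-cycle as outer boundary, the outer face is bounded by a cycle that has a chord; that graph is only $2$-connected, which is exactly the point.) The correct argument needs $3$-connectivity here too: if $e=xy$ is a chord of the face boundary $C$, then $C\cup e$ splits the non-face side into two regions, each meeting $C$ only in one of the two $x$--$y$ arcs, so every path between internal vertices of the two arcs passes through $x$ or $y$, and $\{x,y\}$ is a $2$-cut --- a contradiction. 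Separately, your closing step, from ``all embeddings have the same family of facial cycles'' to ``any two embeddings are related by a homeomorphism of the sphere,'' is asserted rather than argued; it is true, but recovering the rotation system from the facial walks and building the homeomorphism face by face is exactly the content one would have to write out (or cite) to finish. With the chord argument repaired and that last step either proved or explicitly attributed, your proof is the standard, correct one.
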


For proofs of these lemmas, see for instance \cite{MoharThomassen}: Lemma 2.3.3, p. 31 and Lemma 2.5.1, p.39, respectively. From these two lemmas, every triangulated graph has a unique embedding. 

\medskip
\noindent{\it Grids and walls.} The $k\times k$ {\it grid} $M_k$ has as its vertex set all pairs $(i,j)$ for $i,j = 0,1,\ldots, k-1$, and two vertices $(i,j)$ and $(i^\prime,j^\prime)$ are joined by an edge if and only if $|i-i^\prime| + |j-j^\prime| = 1$. 

For $k\geq 2$, let $\Gamma_k$ denote the graph obtained from $M_k$ by triangulating its faces as follows: add an edge between vertices $(i,j)$ and $(i',j')$ if $i-i'=1$ and $j'-j=1$, and
add an edge between corner vertex $(k-1,k-1)$ and every external vertex that is not already adjacent to $(k-1,k-1)$, i.e., every vertex $(i,j)$ with $i\in \{0,k-1\}$ or $j\in \{0,k-1\}$, apart from the vertices $(k-2,k-1)$ and $(k-1,k-2)$. The graph $\Gamma_k$ a called a {\it triangulated grid}. 
See Figure~\ref{f-examples} for the graphs $M_6$ and $\Gamma_6$.
The dual  $\Gamma^*_k$ of a triangulated grid is called a \emph{wall}.

\begin{figure}
  \centering
  \includegraphics[scale=.85]{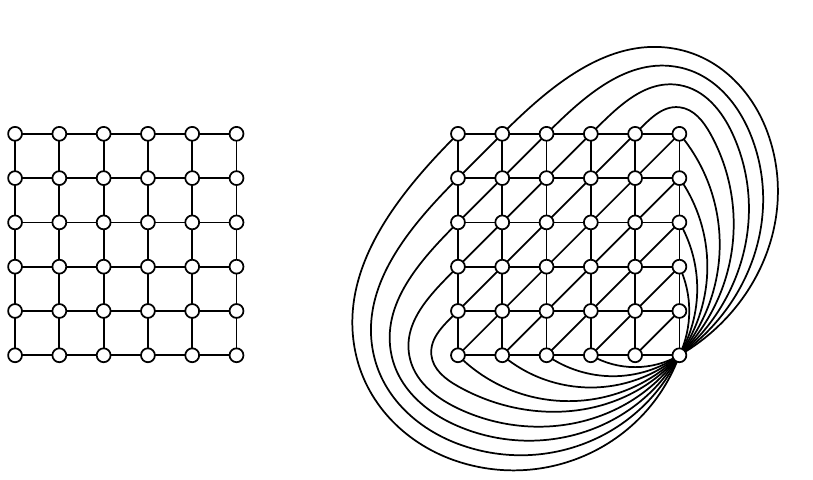}
  \caption{The graphs $M_6$ and $\Gamma_6$, respectively.}\label{f-examples}
\end{figure}

\medskip
\noindent{\it Tree-width.} We recall some results related to tree-width that we will need later in the paper. MSOL is the monadic second order logic.

\begin{lemma}[(6.2) in \cite{RobertsonST94}]\label{lem-grid} 
Let $m \geq 1$ be an integer. Every planar graph with no $m \times m$ grid minor has tree-width $\leq 6m - 5$.
\end{lemma}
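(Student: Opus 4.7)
The plan is to prove the contrapositive: if $\tw{G} \geq 6m-4$, then $G$ contains $M_m$ as a minor.

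The first ingredient is the well-known duality between tree-width and brambles due to Seymour and Thomas: a graph has tree-width at least $k$ if and only if it admits a bramble of order at least $k+1$. Applying this to the hypothesis yields a bramble $\mathcal{B}$ in $G$ of order at least $6m-3$, meaning that no vertex set of size less than $6m-3$ meets every element of $\mathcal{B}$. Note that this step uses nothing about planarity and gives, by itself, only a very crude excluded-grid bound for general graphs; the planar refinement comes next.

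The heart of the argument is to convert the high-order bramble $\mathcal{B}$ into an $M_m$ minor using planarity. Fix a planar embedding of $G$ on the sphere. The goal is to extract from $\mathcal{B}$ a sequence of $m$ pairwise nested noose-like cycles $C_1,\ldots,C_m$ together with $m$ transverse paths crossing each of them; contracting each cycle to a row and each path to a column then yields $M_m$. To build such nested nooses one works with the planar dual: the bramble-order hypothesis forces that every vertex separator between sufficiently distant regions of the embedding is large, and by planar duality this translates to the existence of long enclosing noose cycles. A peeling/layering argument — iteratively choosing the innermost bramble element, isolating it by a short dual cycle, and recursing on the outside — produces the $m$ nested layers.

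The main obstacle, and the source of the specific constant $6$, is the tight accounting of how much of the bramble's order each concentric layer consumes. A coarse version of the above peeling argument easily gives a linear bound of the form $cm$ for some larger $c$, but pinning down exactly $6m-5$ requires exploiting Euler's formula for plane graphs (so that each layer contributes at most $6$ units on average, reflecting the $3n-6$ edge bound of planar triangulations) together with a careful charging scheme on the dual structure. This refined counting — systematically tracking how dual cycles interact with the bramble across successive layers — is the technical core of Robertson, Seymour and Thomas's argument, and it is the step I would expect to be most delicate to reproduce correctly.
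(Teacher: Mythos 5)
This lemma is not proved in the paper at all: it is quoted verbatim as statement (6.2) of Robertson, Seymour and Thomas, \emph{Quickly excluding a planar graph}, and used as a black box. So the only question is whether your proposal constitutes a self-contained proof, and it does not. The opening step is fine --- the contrapositive is stated correctly, and the Seymour--Thomas bramble duality does convert $\tw{G}\geq 6m-4$ into a bramble of order at least $6m-3$. But everything after that is a description of what a proof would have to accomplish rather than a proof. The passage from a high-order bramble to $m$ nested noose cycles each crossed by $m$ transverse paths \emph{is} the theorem; asserting that ``a peeling/layering argument produces the $m$ nested layers'' without specifying how a layer is extracted, why each extraction costs only a bounded amount of bramble order, and why the leftover structure still admits the transverse paths, leaves the entire content unestablished. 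You acknowledge this yourself when you write that the refined counting ``is the technical core of Robertson, Seymour and Thomas's argument'' and the step you ``would expect to be most delicate to reproduce correctly'' --- that is an admission that the core of the argument is missing, not a proof of it.

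Two further points. First, the appeal to Euler's formula and the $3n-6$ edge bound as the ``source of the constant $6$'' is a guess, not an argument: you do not exhibit any charging scheme in which each concentric layer consumes at most $6$ units of bramble order, and it is not clear that the sketched peeling procedure, even if completed, would yield $6m-5$ rather than some larger linear constant. Second, even the qualitative claim that large separators between ``distant regions'' dualize to ``long enclosing noose cycles'' needs a precise statement (which regions? distant in what metric? enclosing what?) before it can support a recursion. If you want an actual proof at this level of generality, you would need to either reproduce the argument of (6.2) in the cited paper or give a complete alternative (e.g., a radial/BFS-layering argument with explicit bookkeeping); as written, the proposal is an outline with the decisive steps deferred.
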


\begin{lemma}[(1.5) in \cite{RobertsonST94}]\label{lem-planar-in-grid} 
If $H$ is a planar graph with $|V(H)| + 2 |E(H)| \leq n$, then $H$ is isomorphic to a minor of the $2n \times 2n$ grid.
\end{lemma}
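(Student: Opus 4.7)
My plan is to realize $H$ as a minor of the $2n\times 2n$ grid by extracting a minor model from a \emph{visibility representation} of $H$ in the integer grid: a drawing in which each vertex of $H$ is drawn as a horizontal integer segment, each edge of $H$ as a vertical integer segment joining the segments of its endpoints, distinct vertex-segments occupy distinct rows, distinct edge-segments occupy distinct columns, and the only intersections occur at the prescribed incidences.

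First, I would fix a planar embedding of $H$ and, if necessary, add a few dummy edges to make $H$ $2$-connected; any minor model of the augmented graph yields one of $H$ after forgetting the dummy edges. I would then apply a standard visibility representation theorem (obtained via an $st$-ordering of the planarly embedded $2$-connected graph): every $2$-connected planar graph on $k$ vertices has such a representation inside a box of width at most $2k-3$ and height at most $k-1$. Since the hypothesis $|V(H)|+2|E(H)|\le n$ in particular gives $|V(H)|\le n$, the representation fits inside a $2n\times 2n$ sub-grid.

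From the representation I would define the minor model as follows. For each vertex $v$ of $H$, let $B_v$ consist of the grid points on the horizontal segment of $v$ together with, for every edge $e=uv$ in which $v$ is the lower endpoint, the lower half of the vertical column of $e$; symmetrically, $B_u$ receives the upper half of that column. Each $B_v$ is connected, because all its vertical ``tails'' meet its horizontal segment, and the branch sets are pairwise vertex-disjoint because distinct segments of the visibility representation occupy distinct rows or distinct columns. For every edge $uv\in E(H)$ the two halves of the corresponding column meet at a single grid edge (after, if needed, a one-step refinement that makes all column lengths even; this still fits inside $2n\times 2n$), and that grid edge joins $B_u$ to $B_v$. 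Hence $\{B_v\}_{v\in V(H)}$ is a minor model of $H$ in the $2n\times 2n$ grid.

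The main obstacle is the first step, namely invoking a visibility representation theorem with an explicit size bound; this rests on $st$-orderings of $2$-connected planar graphs and is standard but non-trivial. The construction in the second step and the disjointness and connectivity checks are then essentially bookkeeping. The hypothesis $|V(H)|+2|E(H)|\le n$ is in fact slightly stronger than what is strictly needed ($|V(H)|\le n$ together with some slack for the grid refinement), which is consistent with this lemma being used in the sequel as a convenient black-box bound rather than a tight estimate.
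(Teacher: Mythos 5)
The paper does not actually prove this lemma: it is imported verbatim as (1.5) from Robertson, Seymour and Thomas, and the only in-paper commentary is the informal heuristic quoted from Diestel (``take a drawing of the graph, fatten its vertices and edges, and superimpose a sufficiently fine plane grid''). So there is no proof to match, and your argument via visibility representations is a genuinely different and legitimate route. Your branch-set construction is sound: the visibility property guarantees that a vertical edge-column meets no third vertex-segment, distinct rows/columns give disjointness, and each $B_v$ is connected because its tails attach to its horizontal bar. Three small remarks. First, the parity refinement is unnecessary: whatever the number of internal grid points of a column, split them into two contiguous blocks (possibly empty), one adjacent to each endpoint row; the boundary is always a single grid edge. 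Second, the $2$-connectivity augmentation must be done so as to preserve planarity (e.g., by triangulating faces of a connected plane drawing, after first joining components inside a common face); since it adds no vertices, your $\bigO{k}\times\bigO{k}$ bound with $k=|V(H)|\le n$ survives. Third, you correctly note that you only use $|V(H)|\le n$; the stronger hypothesis $|V(H)|+2|E(H)|\le n$ reflects the original Robertson--Seymour--Thomas construction, which budgets grid area per vertex \emph{and} per edge, essentially a rigorous version of the ``fattening'' heuristic. The trade-off between the two routes is that yours outsources the hard work to the (standard but nontrivial) visibility-representation theorem with explicit grid bounds, while the fattening/RST route is conceptually more elementary but requires more care to extract the explicit $2n\times 2n$ bound.
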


\begin{lemma}[Theorem 6 in \cite{BouchitteMT03}]\label{lem-twd-dual} 
For any plane graph $G$ and its dual $G^*$, $\tw{G^*} \leq \tw{G} + 1$.
\end{lemma}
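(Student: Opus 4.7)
The plan is to convert an optimal tree decomposition of $G$ into one of $G^*$ whose width is at most one larger, exploiting the planar embedding to translate vertex bags into face bags. Fix a tree decomposition $(T, \{X_t\}_{t \in V(T)})$ of $G$ with $|X_t| \leq \tw{G} + 1$ for every $t$. I would re-use the same tree $T$ and define bags $Y_t \subseteq V(G^*)$ with $|Y_t| \leq \tw{G} + 2$.

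Intuitively, each separator $X_s \cap X_t$ corresponding to an edge $st$ of $T$ embeds into the sphere as a finite point set whose complement splits the drawing of $G$ into regions; the faces of $G$ lying along the boundaries of these regions form a natural dual separator. Guided by this picture, I would let $Y_t$ consist of a carefully chosen collection of faces incident to vertices of $X_t$, together with one distinguished reference face (for example the outer face) added to every bag. The reference face is what accounts for the extra $+1$ in the width bound; it also simplifies the connectedness argument by giving every pair of bags a common face to pivot around.

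The verification then splits into the three standard axioms of a tree decomposition for $G^*$: coverage of $V(G^*)$, coverage of $E(G^*)$, and the running intersection property. Edge coverage is the first delicate point: given an edge $e^* = f_1 f_2$ of $G^*$ corresponding to a primal edge $e = uv$, the construction must guarantee that some bag $X_t$ with $\{u,v\} \subseteq X_t$ induces a $Y_t$ containing both $f_1$ and $f_2$, even though the total number of faces incident to $G[X_t]$ may a priori be as large as $\Theta(|X_t|)$. Keeping $|Y_t| \leq \tw{G}+2$ therefore requires a principled way of selecting, for each $t$, at most $|X_t|+1$ faces that still cover all the dual edges ``handled'' at $t$.

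The main obstacle is the running intersection property on the dual side: for every face $f$, the set $\{t : f \in Y_t\}$ must induce a connected subtree of $T$. To prove this I would use that the boundary walk of $f$ is a connected subgraph $C_f$ of $G$, so that the union $\bigcup_{v \in V(C_f)} \{t : v \in X_t\}$ is already a connected subtree of $T$. Pinning down the definition of $Y_t$ so that $f \in Y_t$ holds exactly when $X_t$ meets $C_f$ in the correct sense, while simultaneously controlling $|Y_t|$, is the technical heart of the argument; this is the point where the additive loss of $+1$ compared to $\tw{G}$ becomes unavoidable, corresponding precisely to the globally included reference face.
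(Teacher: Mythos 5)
The paper does not prove this lemma---it quotes it as Theorem 6 of Bouchitt\'e, Mazoit and Todinca---so your sketch has to stand on its own, and it does not: the object on which everything depends, the dual bag $Y_t$, is never actually defined. You describe it as ``a carefully chosen collection of faces incident to vertices of $X_t$'' subject to a list of desiderata, but producing such a choice is precisely the content of the theorem. The obvious candidate---all faces incident to some vertex of $X_t$---does satisfy edge coverage and, via your boundary-walk observation, the running intersection property, but its size is not $O(|X_t|)$: a single vertex of degree $d$ lies on up to $d$ faces, so these bags can have size linear in $|V(G)|$ even when $\tw{G}$ is a constant. Any selection rule that keeps only $|X_t|+1$ faces must discard faces, and then both edge coverage (the two faces of a dual edge must co-occur in some bag) and connectivity of $\{t : f\in Y_t\}$ are immediately in jeopardy; you acknowledge this tension yourself by calling it ``the technical heart of the argument'' and then leaving it unresolved. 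The claim that one globally added reference face ``accounts for the extra $+1$'' is likewise asserted, not derived.

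It is worth knowing that the inequality is genuinely deep: it was conjectured by Robertson and Seymour, first proved by Lapoire, and the cited proof of Bouchitt\'e, Mazoit and Todinca proceeds through minimal triangulations and the structure of minimal separators of a plane graph and its dual, not through a bag-by-bag translation of a single tree decomposition. (The related self-duality of branch-width for planar graphs, due to Seymour and Thomas, only yields a multiplicative relation between the two tree-widths, not the additive $+1$.) So the gap here is not a missing detail but the entire mechanism of the proof.
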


\begin{lemma}[\cite{Bodlaender96}]\label{lem-twd-linear}
For every fixed $k$, there exists a linear time algorithm deciding whether the input graph has tree-width at most $k$.
\end{lemma}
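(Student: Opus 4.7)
The plan is to combine two main ingredients: a dynamic programming procedure that runs fast when a moderate-width tree decomposition is already available, and a recursive reduction scheme that drives the overall algorithm to linear time. First I would establish the following subroutine: given $G$ together with a tree decomposition of width $\ell=O(k)$, one can decide $\tw{G}\le k$ in time $f(k,\ell)\cdot n$. This is obtained by standard bottom-up processing of a nice tree decomposition, maintaining at each bag the set of ``characteristics'' of possible width-$k$ tree decompositions of the subgraph processed so far; since two partial decompositions with the same characteristic admit the same completions, and the number of characteristics depends only on $k$ and $\ell$, the update at each node costs constant time.

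Next comes the recursion. On an $n$-vertex input $G$, I would first run a constant-factor approximation procedure that in $O(n)$ time either produces a tree decomposition of width $O(k)$ or certifies $\tw{G}>k$; this subroutine can itself be bootstrapped recursively along the same lines. Given such a decomposition, the central claim is a \emph{density lemma}: if $\tw{G}\le k$ then $G$ contains $\Omega(n)$ pairwise disjoint ``reducible'' local substructures, each matching one of a finite catalogue of patterns, such that local rewriting rules applied to these substructures preserve the property $\tw{\cdot}\le k$. Applying all such rewrites in parallel yields a graph $G'$ on at most $(1-\varepsilon)n$ vertices with $\tw{G'}\le k$ iff $\tw{G}\le k$. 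Recursing on $G'$ gives a running time satisfying $T(n)\le T((1-\varepsilon)n)+O(n)=O(n)$. A tree decomposition of $G$ of width $\le k$ can then be reconstructed by reversing the reductions, using the characteristics that the dynamic program has attached to each replaced piece.

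The hard part will be the density lemma together with the design of the finite family of rewrite rules. Proving density requires extracting, from the $O(k)$-width decomposition, a linear-size collection of nodes whose associated local neighbourhoods admit a reduction; this involves a pigeonhole over the finitely many ``types'' of bag overlaps and a careful choice of pairwise disjoint witnesses so that the rewrites do not interfere. Designing the rules so that they are simultaneously (a) locally detectable in linear time, (b) provably tree-width preserving in both directions, and (c) compatible with lifting the dynamic-programming characteristic back across the reduction, is the technical heart of the argument and the main source of complication --- essentially generalising the ad hoc Arnborg--Proskurowski rules for $k=3$ to arbitrary fixed $k$.
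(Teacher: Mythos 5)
This lemma is not proved in the paper at all: it is Bodlaender's linear-time treewidth theorem, imported as a black box from \cite{Bodlaender96}, so there is no in-paper argument to compare yours against. Judged on its own terms, your outline does identify the correct two-level architecture of Bodlaender's actual proof: a characteristic-based dynamic program (the Bodlaender--Kloks algorithm) that decides $\optw(G)\le k$ in linear time once a width-$O(k)$ decomposition is in hand, wrapped inside a recursion that shrinks the graph by a constant factor. That much is right.

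However, there are two genuine gaps. First, your ``density lemma'' is the entire content of the theorem and is only asserted; a finite catalogue of locally detectable, treewidth-preserving-in-both-directions rewrite rules with $\Omega(n)$ pairwise disjoint matches for arbitrary fixed $k$ is exactly the part that took a full paper to establish, and in the generic ``generalise Arnborg--Proskurowski'' form you state it, it is not known to hold. Second, and more structurally, your ordering is circular: you first invoke an $O(n)$-time procedure producing a width-$O(k)$ decomposition (deferred to ``bootstrapping''), and then use that decomposition to locate the reducible substructures --- but obtaining such a decomposition in linear time is precisely the problem being solved. Bodlaender escapes this by making the reduction step decomposition-free: from edge-count and degree arguments alone, a graph of treewidth $\le k$ either has a matching of size $\Omega(n)$ (contract it, recurse on $G/M$, and un-contract the returned width-$k$ decomposition into a width-$(2k+1)$ decomposition of $G$, on which the DP is then run) or has $\Omega(n)$ low-degree ``$I$-simplicial''/friendly vertices that can be removed and reinserted. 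Without that decomposition-free reduction, the recursion as you describe it does not get off the ground.
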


\begin{lemma}[\cite{Courcelle90}]\label{lem-courcelle}
For every fixed $k$ and a problem $\cal P$ expressible in MSOL, there exists a linear time algorithm for $\cal P$ in the class of graphs of tree-width at most $k$.
\end{lemma}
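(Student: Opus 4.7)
The plan is to prove this classical theorem of Courcelle via the well-known correspondence between MSO logic and finite tree automata on parse trees of bounded tree-width graphs. First, I would apply Lemma \ref{lem-twd-linear} to obtain, in linear time, a tree decomposition of width at most $k$ (or to conclude that none exists), and then convert it in linear time into a nice tree decomposition whose nodes are of only four types: leaf, introduce vertex, forget vertex, and join. I would view this decomposition as a parse tree expressing $G$ as a term in an algebra of $k$-boundaried graphs, where the boundary at each node is the corresponding bag and the algebra operations correspond to the four node types.

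Next, I would construct a finite tree automaton $\mathcal{A}_{\mathcal P}$ on such parse trees that accepts exactly those trees whose underlying graph satisfies $\cal P$. The states are MSO $q$-types relative to the boundary, where $q$ is the quantifier rank of the formula defining $\cal P$: two $k$-boundaried graphs are assigned the same state precisely when they satisfy the same MSO formulas of quantifier rank at most $q$ with free variables interpreting the boundary vertices. The transitions are encoded by a finite lookup table over these states, indexed by the four operations. The final step of the algorithm runs $\mathcal{A}_{\mathcal P}$ bottom-up on the parse tree in linear time, performing constant-time lookups at each node, and accepts iff the state at the root is accepting.

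The main obstacle is showing that the set of $q$-types is finite and that composition across the four operations is well defined, i.e.\ that the $q$-type of the output depends only on the $q$-types of the operands. This is a Feferman--Vaught style composition lemma for MSO logic, typically proved by an Ehrenfeucht--Fraïssé game argument: a winning strategy for Duplicator on a glued structure is assembled from winning strategies on the pieces. An alternative is a Myhill--Nerode style argument showing that the equivalence identifying two $k$-boundaried graphs whenever they yield the same truth value of $\cal P$ under every extension has finite index. Once finiteness and compositionality are in hand, the remainder of the argument---converting the composition tables into automaton transitions and executing the dynamic program on the nice tree decomposition---is routine linear-time processing, with all hidden constants depending only on $k$ and $\cal P$.
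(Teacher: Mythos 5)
The paper offers no proof of this lemma at all: it is imported verbatim from Courcelle's 1990 paper (together with Lemma~\ref{lem-twd-linear} for obtaining the decomposition), so there is nothing internal to compare your argument against. That said, your sketch is the standard and correct route to the result. Viewing a nice tree decomposition as a parse tree over an algebra of $k$-boundaried graphs, taking as automaton states the rank-$q$ MSO types relative to the boundary, and establishing finiteness of the state set together with a Feferman--Vaught/Ehrenfeucht--Fra\"iss\'e composition lemma (or, equivalently, a Myhill--Nerode finite-index argument showing MSO-definable classes are recognizable) is essentially Courcelle's own proof in its modern presentation. Two remarks. First, Courcelle's original theorem assumes the tree decomposition is supplied with the input; obtaining overall linear time requires Bodlaender's algorithm, which you correctly invoke via Lemma~\ref{lem-twd-linear}, so your statement is the composite result the paper actually uses. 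Second, what you present is an outline rather than a proof: the finiteness of the set of rank-$q$ types and the compositionality of types under the four algebra operations are exactly where all the work lies, and you defer them to ``a Feferman--Vaught style composition lemma'' without carrying it out. For the purposes of this paper that is entirely appropriate --- the lemma is used as a black box --- but be aware that a self-contained proof would need those two ingredients in full.
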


\medskip
\noindent{\it Pasting along vertices and edges.} If $G$ is a graph with induced subgraphs $G_1$ and $G_2$ such that $G = G_1 \cup G_2$ we say that $G$ arises from $G_1$ and $G_2$ by {\it pasting along} $G_1 \cap G_2$. In this paper, pasting along vertices and edges (that is, if $G_1 \cap G_2$ is a vertex or an edge) is particularly important. We note that pasting planar graphs along vertices and edges creates planar graphs.

\subsection{Containment relations} 

An \defterm{edge contraction} of an edge $e$ in a graph is the graph obtained by removing $e$, identifying its two endpoints, and eliminating parallel edges that may appear. Some basic properties of contractions are collected in \cite{WolleB04}. Formally, for an edge $e$ with endpoints $u$ and $w$, the contraction of $e$, denoted by $G / e$, is the graph with vertex set $V(G / e) = V(G) \setminus \{u,w\} \cup \{v_{uw}\}$ and edge set  

\begin{center}
\begin{tabular}{ccl}
$E(G / e) =  E $ & $\setminus$ & $\{ \; \{x,y\} \in E \; : \; x \in \{u,w\}, \; y \in V  \; \}$  \\
 & $\cup$ & $ \{ \; \{v_{uw}, x \}\; : \;  \{x,u\}\in E \; \vee\; \{x,w\} \in E \; \}.$
\end{tabular}
\end{center}

A graph $H$ is a {\em contraction} of a graph $G$ (or $G$ is \defterm{contractible} to $H$) if $H$ can be obtained from $G$ by a sequence of edge contractions. We denote it by $H \leq_c G$. 

A \defterm{dissolution} of a vertex $v$ of degree 2 in a plane graph $G$ is a contraction of one of the two edges $v$ is incident with in $G$. A graph $H$ that can be obtained from $G$ be a sequence of dissolutions  and edge/vertex deletions is called a \defterm{topological minor} of $G$ ($H <_{tm} G$). Finally, a graph $H$ that is a contraction of a subgraph of $G$ is called its \defterm{minor} ($H <_m G$). 

In the paper, when we speak about different containment relations like contraction or topological minor, the graph $H$ will be called a \emph{pattern}.

\subsection{Parameterized complexity} 

Parameterized complexity is a paradigm in computational complexity and analysis of algorithms that has received a lot of attention in the last 20 years. The idea is to
evaluate the efficiency of an algorithms not only in terms of the size of the input $n$ but also by some parameter $k$ of it. The algorithms whose running time can be bounded by a function $f(k) \cdot n^{\bigO{1}}$ are considered {\sl efficient} from the parameterized complexity point of view and called \defterm{fixed parameter tractable} or \FPT in contrast to algorithms with time complexities $n^{f(k)}$ or worst. For more information on parameterized complexity we refer to \cite{DowneyFellows,FlumG06, Niedermeier-book}.

\section{Previous work} 

The problem of checking whether a graph is a contraction of another has already attracted some attention. In this section we briefly survey known results.
Let $P_n, C_n$ and $K_n$ denote the path, cycle and complete graph on $n$ vertices, respectively.
Let $K_{p,q}$ denote the complete bipartite graph with partition classes of 
size $p$ and $q$, respectively. The graph $K_{1,m}$ for $m\geq 1$ is also
called a {\it star}.

Perhaps the first systematic study of contractions was undertaken by Brouwer and Veldman \cite{BrouwerV87}. Here are two main theorems from that paper.

\begin{theorem}[Theorem 3 in \cite{BrouwerV87}]
A graph $G$ is contractible to $K_{1,m}$ if and only if $G$ is connected and contains an independent set $S$ of $m$ vertices such that $G-S$ is connected.
\end{theorem}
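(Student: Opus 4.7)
The proof splits into the two directions of the biconditional, with most of the work on the forward direction.

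For the backward direction I would proceed directly. Assume $G$ is connected and $S=\{v_1,\dots,v_m\}$ is an independent set with $G-S$ connected. Since $G-S$ is connected, I can contract all of its edges one by one, eventually merging the whole of $G-S$ into a single vertex $c$. After these contractions, the remaining vertices are $c,v_1,\dots,v_m$. Because $S$ is independent in $G$, no edges appear between any $v_i$ and $v_j$. Because $G$ is connected and each $v_i$ has no neighbor in $S$, every $v_i$ must have a neighbor in $G-S$, hence $c$ is adjacent to every $v_i$ in the contracted graph. This is exactly $K_{1,m}$.

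For the forward direction, suppose $G$ contracts to $K_{1,m}$ with center $x_0$ and leaves $x_1,\dots,x_m$. Lift this contraction to a partition of $V(G)$ into connected branch sets $W_0,W_1,\dots,W_m$, where $W_0$ corresponds to $x_0$ and, for $i\geq 1$, $W_i$ corresponds to $x_i$. The defining properties are: each $G[W_i]$ is connected; for $i\geq 1$ there is at least one edge between $W_0$ and $W_i$ (since $x_0x_i\in E(K_{1,m})$); and for $1\leq i<j\leq m$ there is no edge between $W_i$ and $W_j$ (since $x_ix_j\notin E(K_{1,m})$). The goal is to pick one representative $v_i\in W_i$ from each leaf-class so that $S=\{v_1,\dots,v_m\}$ is independent and $G-S$ is connected; connectedness of $G$ is immediate since $K_{1,m}$ is connected.

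Independence of $S$ is automatic from the third property above, regardless of the choice of representatives. The real issue, and the main obstacle I anticipate, is keeping $G-S$ connected. I plan to choose $v_i$ carefully as follows. If $|W_i|=1$, set $v_i$ to be its unique vertex. Otherwise $|W_i|\geq 2$, so $G[W_i]$ has at least two non-cut vertices (a standard fact: any connected graph on $\geq 2$ vertices has at least two such, e.g.\ the two endpoints of a longest path). Fix some $a_i\in W_i$ that is adjacent to $W_0$, and choose $v_i$ to be a non-cut vertex of $G[W_i]$ different from $a_i$, which is possible since there are at least two candidates. Then $G[W_i\setminus\{v_i\}]$ is connected and still contains $a_i$, hence still has an edge to $W_0$.

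Finally I would assemble the pieces: $G-S$ is the disjoint union of $G[W_0]$ and the non-empty sets $G[W_i\setminus\{v_i\}]$, each of which is connected by the choice above and joined by an edge to the connected set $W_0$. Hence $G-S$ is connected, $|S|=m$, and $S$ is independent, completing the forward direction.
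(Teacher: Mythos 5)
This theorem is only quoted in the paper (as Theorem~3 of Brouwer and Veldman) and no proof is given there, so there is nothing to compare against; judged on its own, your argument is correct and is essentially the standard witness-structure proof. Both directions are sound: the backward direction correctly contracts the connected set $V(G)\setminus S$ to a single center, and in the forward direction your choice of a non-cut vertex of $G[W_i]$ distinct from a vertex $a_i$ with a neighbour in $W_0$ is exactly the right device to keep $G-S$ connected. The only (harmless) caveat is the degenerate case $V(G)=S$, which is excluded by the usual convention that the empty graph is not connected.
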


In particular, a graph is contractible to $P_3$ if and only if it is connected and is neither a cycle nor a complete graph. The theorem also allows to detect, in polynomial time, if a graph is contractible to $K_{1,m}$. It suffices to enumerate over all sets $S$ with $m$ independent vertices and check if the graph $G-S$ is connected. This gives an $|V(G)|^{\bigO{m}}$ algorithm, which is polynomial for every fixed $m$.

\begin{theorem}[Theorem 9 in \cite{BrouwerV87}]
If $H$ is a connected triangle-free graph other than a star, then contractibility to $H$ is \NP-complete.
\end{theorem}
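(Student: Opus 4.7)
The problem lies in \NP{}: an \emph{$H$-witness structure}, i.e.\ a partition $\{W_h : h \in V(H)\}$ of $V(G)$ into pairwise disjoint nonempty sets, each inducing a connected subgraph, with an edge in $G$ between $W_h$ and $W_{h'}$ iff $hh' \in E(H)$, can be checked in polynomial time by BFS followed by a scan of the edges. The real work is therefore \NP-hardness, which I would establish by reduction from a standard \NP-complete problem.

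Before designing the reduction I would extract from the hypotheses on $H$ the structural feature I intend to exploit. Since $H$ is triangle-free, any cycle of $H$ has length at least $4$. If $H$ contains an induced cycle, fix one such $C$ of length $\ell \geq 4$; otherwise $H$ is chordal and triangle-free, hence a forest, and since it is connected it is a tree, and a tree that is not a star contains an induced $P_4$. In either case $H$ contains an induced edge $bc$ together with vertices $a \in N(b)\setminus N(c)$ and $d \in N(c) \setminus N(b)$ with $ad \notin E(H)$. This ``$P_4$-like'' substructure is the lever I will use to encode binary choice inside contraction witnesses.

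With this in hand I would reduce from \textsc{Not-All-Equal $3$-Sat}. Given an instance $I$, the construction of $G_I$ has three parts: (i) a \emph{skeleton} obtained by replacing each vertex $h \in V(H) \setminus \{b,c\}$ by a rigid gadget (for example a large biclique, or a sufficiently dense copy of the local neighbourhood of $h$ in $H$) that, in every $H$-witness structure of $G_I$, must constitute exactly the part $W_h$; (ii) one \emph{variable gadget} per literal $x_i$, a small graph admitting exactly two minimal contractions onto the edge $bc$ of $H$, the two states being distinguished by whether a link-vertex is routed to the $a$-side or to the $d$-side; and (iii) one \emph{clause gadget} per triple of literals, realised as a short connector attached to the three corresponding variable gadgets and contractible correctly iff not all three literals share the same truth value. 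The non-edge $ad$, together with triangle-freeness of $H$, forbids two ``$a$-routed'' parts from coexisting with a ``$d$-routed'' part across the same clause, since the resulting adjacencies would have to be realised by an edge or triangle absent from $H$.

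The principal obstacle, as I see it, is \emph{rigidity}: proving that for every admissible $H$ the skeleton really does force the claimed witness structure on $H - \{b,c\}$, so that the only surviving degrees of freedom are the binary choices inside the variable gadgets. The difficulty is that the part of $H$ outside the chosen $P_4$ or induced cycle can be arbitrarily rich and may itself contain many induced $P_4$s and long induced cycles, so a naive gadget could be defeated by an alternative contraction that reroutes through this extra structure. Triangle-freeness is the crucial hammer here: any such short-circuit would merge vertices whose combined neighbourhood in the witness partition would realise a triangle in $H$, a contradiction. I expect the bulk of the technical labour to sit in this rigidity verification, and in particular in the case where the structural feature is a long induced cycle instead of an induced $P_4$, where the cyclic symmetry makes the forcing argument more delicate.
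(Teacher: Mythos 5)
The paper does not prove this statement itself; it imports it from Brouwer and Veldman, so your attempt has to stand on its own. It does not: there is a genuine gap at the very first step, before any gadget is built. Your structural claim --- that every connected triangle-free non-star $H$ contains an edge $bc$ together with $a\in N(b)\setminus N(c)$ and $d\in N(c)\setminus N(b)$ with $ad\notin E(H)$, i.e.\ an induced $P_4$ --- is false. The graph $C_4$ is connected, triangle-free and not a star, yet for the $4$-cycle $a\,b\,c\,d\,a$ the only candidate pair gives $ad\in E(H)$; your own case analysis slips here, since an induced cycle of length exactly $4$ does not supply the required non-edge (only length $\geq 5$ does). More generally, the connected triangle-free graphs with no induced $P_4$ are exactly the complete bipartite graphs, so the entire family $\{K_{p,q} : p,q\geq 2\}$ --- which includes $C_4=K_{2,2}$ and the graphs $K_{2,r}$ that play a central role elsewhere in this paper --- escapes your lever completely. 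Since the theorem explicitly covers these patterns (the paper even singles out $C_4$ as a consequence), a reduction built on that substructure cannot prove it. The correct weak fact available for all admissible $H$ is only that some edge has both endpoints of degree at least $2$, and Brouwer--Veldman-style arguments bootstrap from separately established hardness for $P_4$ and $C_4$ rather than from an induced $P_4$ inside $H$.

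Even leaving that aside, the hardness half is a plan rather than a proof: the ``rigid gadget'' forcing each part $W_h$, the two-state variable gadget, and the clause gadget are all unspecified, and you yourself name the rigidity verification as the principal obstacle without resolving it. For contractibility this is not a routine detail that can be waved through --- the results surveyed in this very paper (polynomial cases for some patterns with a dominating vertex versus an \NP-complete $69$-vertex pattern with one) show that whether such forcing gadgets exist depends delicately on $H$. As written, the proposal establishes \NP\ membership and nothing more.
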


Hence, checking if a graph is contractible to $P_4$ or $C_4$ is \NP-complete. More generally, it is \NP-complete for every bipartite graph with at least one connected component that is not a star. 

The research direction initiated by Brouwer and Veldman was continued by Levin, Paulusma, and 
Woeginger~\cite{LevinPW08,LevinPW08a}. Here is the main result established in these two papers.

\begin{theorem}[Theorem 3 in \cite{LevinPW08}]
Let $H$ be a connected graph on at most 5 vertices. If $H$ has a dominating vertex, then contractibility to $H$ can be decided in polynomial time. If $H$ does not have a dominating vertex, then contractibility to $H$ is \NP-complete.
\end{theorem}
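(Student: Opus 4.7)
The plan is to settle the theorem by an exhaustive case analysis over the finite list of connected graphs $H$ on at most five vertices, separated according to whether $H$ has a dominating vertex (to be proved tractable) or not (to be proved NP-complete). Throughout I would work in the framework of \emph{$H$-witness structures}: an $H$-contraction of $G$ corresponds precisely to a partition $\{W_x : x \in V(H)\}$ of $V(G)$ into nonempty connected \emph{bags} such that $G$ contains an edge between $W_x$ and $W_y$ if and only if $xy \in E(H)$.

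For the tractability half, let $v$ be a dominating vertex of $H$ and set $H' := H - v$. In any $H$-witness structure of $G$, the bag $W_v$ is a connected set sending at least one edge to each of the other $|V(H)|-1$ bags, and $G - W_v$ carries an $H'$-witness structure whose every bag is adjacent in $G$ to $W_v$. Since $|V(H')|\le 4$, only a short list of graphs can arise as $H'$. For each I would either enumerate a polynomial number of candidates for $W_v$ based on local degree and connectivity conditions in $G$, or identify $W_v$ canonically by a structural argument, and then test $(H-v)$-contractibility of the residue together with the boundary adjacency constraint. For the sparser choices of $H'$ I could lean on Brouwer--Veldman-type characterisations such as the one quoted for $K_{1,m}$; the main obstacle lies in the dense cases $H = K_5$ and $H = W_4$ (the $4$-wheel), for which I would need a structural uniqueness argument to avoid an otherwise exponential search for $W_v$.

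For the NP-hardness half, I would start by invoking the Brouwer--Veldman theorem from the previous section, which immediately dispatches every connected triangle-free $H$ on at most five vertices that is not a star: this covers $P_4$, $P_5$, $C_4$, $C_5$, $K_{2,3}$, and the two non-star trees on five vertices. The remaining cases are the connected graphs on five vertices that contain a triangle but admit no dominating vertex, for instance the bull, the house, and the triangle with a $P_3$ appended. For each such $H$ I would construct an explicit polynomial reduction from a problem already known to be NP-complete, either from contractibility to a suitable triangle-free $H''$ or from a classical problem such as \textsc{3-Sat}, using gadgets that encode the source instance while forbidding unintended $H$-witness structures. I expect this gadget engineering step to be the dominant technical difficulty: because the remaining targets $H$ differ substantially in their local structure, a single uniform reduction seems out of reach, and a separate construction would probably be required for each.
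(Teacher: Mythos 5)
This theorem is not proved in the paper at all; it is quoted verbatim from Levin, Paulusma and Woeginger \cite{LevinPW08,LevinPW08a} as background, so there is no in-paper proof to compare against. Your proposal does follow the general strategy of those papers (witness structures, exhaustive case analysis over the finitely many connected $H$ on at most five vertices, Brouwer--Veldman for the triangle-free non-star targets), but as written it is a plan rather than a proof, and the parts you defer are exactly where all the mathematical content lives.

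Concretely, there are two genuine gaps. First, on the tractability side, your reduction ``guess $W_v$, then test $(H-v)$-contractibility of the residue'' runs into the problem that the residue problem is itself \NP-complete for several of the relevant $H'$: for $H=W_4$ you get $H'=C_4$, and for the graph obtained by adding a dominating vertex to $P_4$ you get $H'=P_4$, and contractibility to $C_4$ and to $P_4$ is \NP-complete by the very Brouwer--Veldman theorem you cite. So polynomiality cannot come from solving the residue problem; it must come from a structural analysis of how the dominating bag $W_v$ interacts with the other bags, and you explicitly acknowledge that you do not have the ``structural uniqueness argument'' needed for the dense cases. Second, on the hardness side, every triangle-containing connected $H$ on five vertices without a dominating vertex (the bull, the house, the cricket, the triangle with a pendant $P_2$, etc.)\ requires its own reduction with bespoke gadgets, and you supply none of them; you again flag this as ``the dominant technical difficulty.'' Identifying where the difficulty lies is not the same as resolving it, so the proposal cannot be accepted as a proof. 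If you want to fill these gaps you would essentially have to reproduce the case-by-case arguments of \cite{LevinPW08} and \cite{LevinPW08a}, which is where this dichotomy is actually established.
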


However, the existence of a dominating vertex in the pattern $H$ is not enough to ensure that contractibility to $H$ can be decided in polynomial time. A pattern on 69 vertices for which contractibility to $H$ is \NP-complete was exhibited in \cite{HofKPST10}.

Looking at contractions to fixed pattern graphs is justified by the following theorems proved by Matou\v sek and Thomas in \cite{MatousekT92}. 

\begin{theorem}[Theorem 4.1 in \cite{MatousekT92}]
The problem of deciding, given two input graphs $G$ and $H$, whether $G$ is contractible to $H$ is \NP-complete even if we impose one of the following restrictions on $G$ and $H$:
\begin{description}
\item[(i)] $H$ and $G$ are trees of bounded diameter,
\item[(ii)] $H$ and $G$ are trees all whose vertices but one have degree at most $5$.
\end{description}
\end{theorem}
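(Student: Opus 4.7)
The plan is to prove NP-hardness; membership in NP is immediate because a contraction certificate is a partition of $V(G)$ into branch sets, and verifying that each part induces a connected subgraph and that the quotient graph is isomorphic to $H$ takes polynomial time. The natural source problem is 3-Partition, which is strongly NP-complete: given $a_1,\ldots,a_{3m}$ with $\sum_i a_i = mB$ and $B/4 < a_i < B/2$, decide whether the $a_i$ split into $m$ triples each summing to $B$. What makes such a reduction work for trees is the following rigidity: whenever a tree $G$ contracts to a tree $H$, the branch sets $\{B_v : v \in V(H)\}$ partition $V(G)$ into connected subtrees, and for every edge $uv \in E(H)$ there is exactly one $G$-edge joining $B_u$ and $B_v$ (more would create a cycle in $G$). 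This forces branch-set boundaries into predictable places.

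For part (ii), I would take $H$ to have a single ``hub'' vertex $c_H$ of degree $m$ with $m$ pendant gadgets encoding the target $B$, and $G$ to have a hub $c_G$ of degree $3m$ with $3m$ pendant gadgets encoding the $a_i$; all other vertices would be arranged to have degree at most $5$. Each gadget must be more elaborate than a bare path, since two disjoint pendant paths at the hub cannot be combined into one $H$-leg without absorbing the hub (which then disconnects the candidate branch sets); a caterpillar or comb shape works. In a valid contraction, $c_G$ must sit in $B_{c_H}$, and a subcollection of $G$-gadgets is fully absorbed into $B_{c_H}$ while the others extend outward to realise the $m$ legs of $H$. The gadget is designed so that each triple of extending $G$-gadgets must have ``weights'' summing to $B$, giving a direct bijection with 3-Partition solutions.

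For part (i), the unary encoding of 3-Partition makes the diameter grow with the input, so I would switch to a source problem of constant ``item size'', for instance Exact Cover by $3$-Sets or $3$-Dimensional Matching. The same hub-plus-gadget template applies, but the gadgets are now constant-sized subtrees, keeping the total diameter bounded by an absolute constant. The main obstacle throughout is gadget design: one must rule out all ``shortcut'' contractions in which $B_{c_H}$ swallows unintended vertices, or in which two $G$-gadgets meant for different $H$-legs end up merged through the hub. This is typically enforced by decorating each gadget with small pendant subtrees (e.g.\ distinctively-sized brushes of leaves) that pin the branch-set boundary to a unique position; once the gadgets are rigid enough, correctness reduces to a routine case analysis on the possible shapes of $B_{c_H}$ and each leg branch set.
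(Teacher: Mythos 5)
The paper itself offers no proof of this statement---it is quoted from Matou\v{s}ek and Thomas as background---so your proposal has to stand on its own. Your NP-membership argument is fine, and your rigidity observation (branch sets of a tree-to-tree contraction are connected subtrees, and each edge of $H$ corresponds to exactly one $G$-edge between the two branch sets) is correct. The problem is that this very rigidity refutes the mechanism you propose for encoding 3-Partition. Suppose $c_G\in B_{c_H}$, and let $v_1v_2\cdots v_k$ be a leg of $H$ pendant at $c_H$. Each $B_{v_j}$ is connected, consecutive ones are joined by a $G$-edge, and all are disjoint from $B_{c_H}$; hence $B_{v_1}\cup\cdots\cup B_{v_k}$ is a connected subgraph of $G$ avoiding $c_G$ and therefore lies inside a \emph{single} pendant subtree of $c_G$, i.e., inside one of your $3m$ item gadgets. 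So one $H$-leg can never be assembled from a triple of $G$-gadgets ``whose weights sum to $B$'': the grouping of items into triples, which is the entire combinatorial content of 3-Partition, is precisely what the tree structure forbids. Replacing bare paths by caterpillars or combs changes nothing, since the obstruction uses only that the gadgets are pendant at the hub; and the degenerate case $c_G\notin B_{c_H}$ does not help, because then some low-degree branch set must absorb almost all of $G$. Your sketch is also internally inconsistent: if $m$ of the $3m$ gadgets ``extend outward to realise the $m$ legs'' and the rest are absorbed, there is no triple structure left to compare with $B$, whereas 3-Partition constrains all $3m$ items, not a selected $m$ of them.

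More precisely, one can show that the components of $G-B_{c_H}$ are in bijection with the legs of $H$, each component contracting onto its own leg. So all the freedom in such a contraction lies in (a) how much of each $G$-gadget the hub branch set swallows and (b) which leftover component is matched to which $H$-leg. A correct reduction must therefore locate the hardness there: for instance by making the gadgets on one side \emph{branch}, so that a single gadget splits into several components realising several legs on the other side (encoding ``one set covers three elements'' with the roles of $G$ and $H$ essentially reversed from yours), or by encoding a numerical matching condition in the amounts absorbed by the hub. The same objection applies verbatim to your X3C variant for part (i), since covering a 3-set again requires merging three pendant pieces. As written, the proposal does not establish either part of the theorem.
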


\begin{theorem}[Theorem 4.3 in \cite{MatousekT92}]
For every fixed $k$, the problem of deciding, given two input graphs $G$ and $H$, whether $G$ is contractible to $H$ is \NP-complete even if we restrict $G$ to partial $k$-trees and $H$ to $k$-connected graphs. 
\end{theorem}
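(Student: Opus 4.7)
The plan is to reduce from a canonical NP-complete problem such as 3-SAT (or a constraint-satisfaction variant like 1-in-3-SAT, whichever matches the gadgets better). Given an instance $\varphi$ with $n$ variables and $m$ clauses, I would construct a pair $(G,H)$ in which the pattern $H$ encodes the constraints of $\varphi$ and the host $G$ is a long chain of gadgets arranged along a bounded-width "spine", in such a way that $H \leq_c G$ if and only if $\varphi$ is satisfiable.

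The first concrete step is to design a \emph{variable gadget} in $G$ whose only legal contractions (given the shape of the corresponding part of $H$) partition its vertices into two canonical witness families, representing the truth assignments of the variable. Next I would design a \emph{clause gadget} whose contraction to the clause part of $H$ is possible precisely when at least one of the three incident literal witnesses has been "released" by its variable gadget in the satisfying orientation. The gadgets are linked in series along a path-like backbone, so that $G$ admits a natural tree decomposition of width depending only on $k$; verifying $\tw{G} \leq k$ is then a direct bookkeeping argument on this decomposition. To realise the $k$-connectivity of $H$, I would inflate the pattern by taking its join with a clique of size $O(k)$ (or by splicing in copies of $K_{k+1}$ along separators), after checking that such an apex-style padding in $H$ can be mirrored by a corresponding padding in $G$ without spoiling either the tree-width bound or the gadget-by-gadget contractibility argument.

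Finally, I would verify the correspondence between contractions and satisfying assignments: one direction exhibits an explicit contraction from a given assignment, and the other uses the fact that any witness-set decomposition of $G$ realising $H$ must respect the separators of the spine, so local choices in each gadget propagate consistently along the chain.

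The main obstacle I anticipate is \emph{global consistency of variable assignments}. Since $G$ has tree-width at most $k$, information between distant gadgets can only pass through separators of size at most $k+1$, which is a severe bottleneck for simulating the arbitrary occurrence pattern of a variable in many clauses. Overcoming this requires exploiting the $k$-connectivity of $H$: the high connectivity forces many witness sets in $G$ to interact with a common "hub" substructure, and this hub is exactly what can be used to carry the truth value of each variable through the backbone consistently. Calibrating the gadgets so that the padding used to make $H$ be $k$-connected does not accidentally permit additional spurious contractions is the delicate part of the construction.
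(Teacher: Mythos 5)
This statement is quoted verbatim from Matou\v sek and Thomas (Theorem~4.3 of their paper); the present paper gives no proof of it, so there is nothing in the text to compare your argument against. Judged on its own, your proposal is a reduction template rather than a proof, and the part you yourself flag as ``the main obstacle'' --- propagating a variable's truth value to all of its clause occurrences through separators of size $k+1$ --- is precisely the step that is missing and that I do not believe your sketch can deliver. Your proposed fix, a ``hub'' forced by the $k$-connectivity of $H$, does not work: a hub substructure of $G$ is contracted onto a bounded number of vertices of $H$, and a bounded number of witness sets cannot carry $n$ independent bits of assignment information. For this reason the known hardness proofs in this regime (including Matou\v sek--Thomas's tree results, Theorem~4.1 above) do not reduce from SAT at all; they reduce from strongly \NP-complete numeric problems such as 3-\textsc{Partition} or from exact-cover/matching-type problems, whose constraints are local to a single high-degree vertex and therefore need no long-range consistency across the bounded-width host.

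There is a second, quantitative gap in the padding step. Tree-width is minor-monotone, so $H\leq_c G$ and $\tw{G}\le k$ force $\tw{H}\le k$; on the other hand a $k$-connected graph is not $(k-1)$-degenerate and hence has tree-width at least $k$. So the instances you must produce satisfy $\tw{G}=\tw{H}=k$ exactly, with no slack. Taking the join of a nontrivial pattern $H_0$ with a clique of size $O(k)$ raises the tree-width of $H$ to $\tw{H_0}+O(k)$ and immediately overshoots unless $H_0$ is essentially a forest and the clique has size exactly $k-1$. Any correct construction has to be calibrated at this boundary from the start, which constrains the gadgets far more severely than your sketch acknowledges. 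To repair the proposal you would need to (i) switch the source problem to one whose hardness survives bounded tree-width of the host, and (ii) design $H$ directly as a $k$-connected graph of tree-width exactly $k$ rather than as a padded version of an unconstrained pattern.
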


The authors also proved a positive result. 

\begin{theorem}[Theorem 5.14 in \cite{MatousekT92}]
For every fixed $\Delta, k$, there exists an $\bigO{|V(H)|^{k+1} \cdot |V(G)|}$ algorithm to decide, given two input graphs $G$ and $H$, whether $G$ is contractible to $H$, when the maximum degree of $H$ is at most $\Delta$ and $G$ is a partial $k$-tree.
\end{theorem}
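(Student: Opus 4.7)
The plan is a dynamic programming algorithm on a tree decomposition of $G$. First I would invoke Lemma~\ref{lem-twd-linear} to compute, in linear time, a tree decomposition $(T,\{X_t\}_{t\in V(T)})$ of $G$ of width at most $k$, then transform it into a nice tree decomposition with $\bigO{|V(G)|}$ nodes of standard types (leaf, introduce, forget, join). A contraction of $G$ to $H$ corresponds to an $H$-\emph{model}: a partition of $V(G)$ into connected branch sets $\{B_v\}_{v\in V(H)}$ such that for every $\{u,v\}\in E(H)$ some edge of $G$ has one endpoint in $B_u$ and the other in $B_v$. The DP searches over all such models.

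The state at a bag $X_t$ encodes a candidate restriction of the model to the subgraph $G_t$ processed so far. Concretely, a state consists of: (i) a label $\phi(x)\in V(H)$ for every $x\in X_t$, indicating which branch set $x$ belongs to; (ii) a partition $\pi$ refining the classes of $\phi$, recording for each $v\in\phi(X_t)$ how the vertices of $X_t\cap B_v$ break into connected components of $B_v\cap G_t$; and (iii) for each label $v$ appearing in $X_t$, the subset $R_v\subseteq E_H(v)$ of $H$-edges incident to $v$ already realized by an edge of $G_t$. Since $|X_t|\le k+1$ and $H$ has maximum degree $\Delta$, the number of partitions $\pi$ is bounded by the Bell number $B_{k+1}$, and the number of possible $R_v$-tuples is at most $(2^\Delta)^{k+1}$; both are constants depending only on $k$ and $\Delta$. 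Thus the number of states per bag is $\bigO{|V(H)|^{k+1}}$, where the hidden constant absorbs the $k,\Delta$-factors.

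Transitions at each node type run in time polynomial in the state count, which, summed over the $\bigO{|V(G)|}$ nodes, yields the claimed $\bigO{|V(H)|^{k+1}\cdot|V(G)|}$ bound. Introducing a vertex $x$ extends $\phi$ by a label from $V(H)$ and places $x$ in a new singleton class of $\pi$. A forget node removes $x$ from $X_t$; if $x$ was the last bag-representative of its branch set $B_v$, we verify (by consulting $\pi$ and $R_v$) that $B_v$ is now a single connected component and that $R_v=E_H(v)$, otherwise the state is declared infeasible. For each edge of $G$ with both endpoints in the bag, if both endpoints share a label we merge their $\pi$-classes, and if they carry distinct labels $u,v$ with $\{u,v\}\in E(H)$ we add $\{u,v\}$ to $R_u$ and $R_v$. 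Join nodes combine two compatible children states by taking the union of the $R_v$'s and the transitive closure of the union of their partitions. The root is accepting iff every vertex of $H$ has been ``completed'' by some forget step along the way, which can be checked by maintaining a global bitmask; to keep the state count at $\bigO{|V(H)|^{k+1}}$, this completion check is localized to forget transitions.

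The main obstacle is the simultaneous bookkeeping of connectivity and edge-realization: we must ensure that a branch set is neither split into two components nor left with an unrealized incident $H$-edge at the moment its last representative is forgotten, while still running linearly in $|V(G)|$. The bounded degree of $H$ is essential precisely here, since it is what keeps the $R_v$-component of the state of constant size per bag vertex; without the degree bound, tracking which edges of $H$ have been realized at the frontier would require information of size exponential in $|E(H)|$, and the $|V(H)|^{k+1}$ bound would no longer suffice.
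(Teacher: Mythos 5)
The paper does not actually prove this statement --- it is quoted from Matou\v sek and Thomas --- so your attempt can only be judged on its own merits. Your overall strategy (dynamic programming over a width-$k$ tree decomposition, with states recording a bag labelling $\phi:X_t\to V(H)$, a connectivity partition, and the realized $H$-edges at each label) is the standard and essentially the intended one, but it has two genuine gaps. The first is that your witness structure characterizes the wrong relation: for $H\leq_c G$ the partition into connected branch sets must satisfy $\{u,v\}\in E(H)$ \emph{if and only if} some $G$-edge joins $B_u$ and $B_v$. You impose only the ``if'' direction, and your transitions never reject a state when a bag edge joins two branch sets whose labels are non-adjacent in $H$; as written the algorithm accepts whenever $G$ contracts onto a graph containing $H$ as a spanning subgraph. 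This is easily fixed, but it is a correctness error, and it is precisely the point where contraction differs from (spanning) minor containment.

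The second gap is more serious: the ``global bitmask'' of completed $H$-vertices is not a coherent object in a DP over subtrees. Which vertices of $H$ have already been realized below the current node genuinely differs from state to state and, at a join node, from child to child, so this information must be part of the state --- yet an arbitrary subset of $V(H)$ gives $2^{|V(H)|}$ states and destroys the claimed bound. The missing idea is that the branch sets lying entirely below the bag always form a union of connected components of $H-\phi(X_t)$ (because $X_t$ separates $G$, and adjacent branch sets must lie on the same side), and --- this is where the degree bound on $H$ does its real work --- when $H$ is connected, $H-\phi(X_t)$ has at most $(k+1)\Delta$ components, so this extra state component takes only $2^{(k+1)\Delta}$ values. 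Without it you cannot combine children at a join node, cannot verify surjectivity at the root, and cannot prevent a label $v$ whose branch set was already closed from being reused at a later introduce node (which would certify a disconnected branch set). Your stated reason for needing $\Delta$ (keeping each $R_v$ of constant size) is correct but secondary. Finally, ``transitions run in time polynomial in the state count'' overshoots the claimed running time at join nodes; you must group states by the common labelling $\phi$ so that each node costs $\bigO{|V(H)|^{k+1}}$ rather than the square of that.
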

In a previous paper, we studied the problem of contracting a planar graph to a fixed pattern \cite{KaminskiPT10}. Here we will need some of the definitions and results of that paper. 

An \defterm{embedded contraction} of an edge $e$ of a plane graph $G$ is a plane graph $G'$ that is obtained by homeomorphically mapping the endpoints of $e$ in $G$ to a single vertex without any edge crossings and recursively removing one of the two parallel edges bounding a 2-face, if a graph has such a pair. Note that there are many embedded contractions of an edge of a plane graph $G$ but they are all combinatorially equivalent.

An \defterm{embedded dissolution} of a vertex $v$ of degree 2  in a plane graph $G$ is an embedded contraction of one of the two edges $v$ is incident with in $G$. 

Let $G$ and $H$ be two plane graphs. We say that $H$ is an embedded contraction of $G$ ($H \leq_{ec} G$), if $H$ is combinatorially equivalent to a graph that can be obtained from $G$ by a series of embedded contractions. We say  that $H$ is an \defterm{embedded topological minor} of $G$ ($H \leq_{etm} G$), if $H$ is combinatorially equivalent to a graph that can be obtained from $G$ by a series of vertex and edge deletions, and embedded dissolution of vertices of degree 2.

The main technical result of \cite{KaminskiPT10} is an equivalence between embedded contractions in a planar graph and embedded topological minors in its dual. (A multigraph is called \emph{thin} if it has no two parallel edges bounding a 2-face. Simple graphs are in particular thin.)

\begin{lemma}[Lemma 2 in \cite{KaminskiPT10}]\label{lem-ec-etm}
Let $H$ and $G$ be two thin planar graphs and $H^*$, $G^*$ their respective duals. 
\[ H \leq_{ec} G \iff H^* \leq_{etm} G^* \]
\end{lemma}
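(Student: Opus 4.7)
My plan is to leverage the classical plane-duality between edge deletion and edge contraction, $(G/e)^{*}=G^{*}\setminus e^{*}$ and $(G\setminus e)^{*}=G^{*}/e^{*}$, together with the observation that a $2$-face of a plane multigraph dualizes to a vertex of degree $2$ in its dual. Under this correspondence, simplifying a $2$-face (i.e.\ removing one of the two parallel edges bounding it) is exactly dual to dissolving the corresponding degree-$2$ vertex. Each direction of the equivalence is then established by induction on the length of the witnessing operation sequence, mirroring one step at a time via this dictionary.

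For the forward direction, I would consider a single embedded contraction that turns $G$ into $G'$, namely $G/e$ followed by the simplification of any newly created $2$-faces. The first half is mirrored in $G^{*}$ by the edge deletion of $e^{*}$, thanks to $(G/e)^{*}=G^{*}\setminus e^{*}$. A $2$-face can appear in $G/e$ only at a face of $G$ that was a triangle through $e$; dually, an endpoint $f^{*}$ of $e^{*}$ drops to degree exactly $2$ precisely in that case. Simplifying such a $2$-face in the primal is then exactly an embedded dissolution of $f^{*}$ in the dual, so the entire embedded contraction of $G$ is mirrored in $G^{*}$ by one edge deletion followed by at most two dissolutions. Iterating along the given witness sequence yields $H^{*}\leq_{etm} G^{*}$.

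For the backward direction, I would first normalize the ETM sequence into a canonical form consisting only of edge deletions and dissolutions: a vertex deletion of a degree-$d$ vertex $v^{*}$ is rewritten as $d$ deletions of its incident edges, followed by silently discarding the resulting isolated vertex (which in the primal corresponds to a face whose boundary has been contracted to a single point and is therefore benign). The canonical sequence is then mirrored step-by-step: each dual edge deletion by a primal edge contraction, each dual dissolution by the removal of one edge of the corresponding primal $2$-face, i.e.\ the simplification built into the embedded contraction definition. The main obstacle is certifying that every dissolution invoked in the dual is matched by a legitimate simplification of a $2$-face that is actually present in the current primal. Thinness of $G$ and $H$ rules out spurious $2$-faces at the endpoints, but along the way one must carefully reschedule the ETM operations so that each dissolution of a degree-$2$ vertex immediately follows the edge deletion that created it, invoking commutativity (up to combinatorial equivalence) of independent deletion/dissolution operations in plane multigraphs. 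Making this rescheduling rigorous, and verifying that it preserves the final output up to $\simeq$, is where the bulk of the technical work will reside.
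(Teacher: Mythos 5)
This lemma is imported verbatim from \cite{KaminskiPT10} (Lemma~2 there) and the present paper gives no proof of it, so there is nothing in this text to compare your argument against line by line; your duality dictionary ($(G/e)^{*}=G^{*}\setminus e^{*}$, $(G\setminus e)^{*}=G^{*}/e^{*}$, $2$-face simplification $\leftrightarrow$ dissolution of a degree-$2$ dual vertex) is exactly the mechanism the cited proof is built on, and your forward direction is essentially complete: one embedded contraction of $e$ is mirrored by deleting $e^{*}$ and dissolving the at most two face-vertices whose degree drops to $2$, and induction on the witness sequence finishes it.

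The backward direction, however, is not actually proved: you name the hard step (rescheduling the ETM witness so that every dissolution immediately follows the deletion that created its degree-$2$ vertex, and so that \emph{every} degree-$2$ vertex created by a deletion gets dissolved, as the definition of embedded contraction forces in the primal) and then defer it as ``the bulk of the technical work.'' That is precisely the content of the lemma in this direction, and there are concrete cases your sketch does not survive as written. First, an ETM witness may delete an edge $e^{*}$, thereby creating a degree-$2$ vertex $f^{*}$, and then never dissolve $f^{*}$ but instead delete one of its two remaining edges later; the primal mirror of that later deletion is the contraction of one edge of the parallel pair bounding the $2$-face $f$, which creates a loop and is not an embedded contraction of a thin graph --- so the naive one-step mirroring breaks, and you must argue that the witness can be rewritten to avoid this (using that $H$ is thin, hence $H^{*}$ retains no such $2$-face vertices). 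Second, your normalization of a vertex deletion into edge deletions plus ``silently discarding the isolated vertex'' needs justification on the primal side: an isolated dual vertex corresponds to a face all of whose boundary edges have been contracted, and you must check that the connectivity hypotheses under which $(G/e)^{*}=G^{*}\setminus e^{*}$ and $(H^{*})^{*}\simeq H$ hold are maintained throughout (deleting $e^{*}$ when $e^{*}$ is a bridge of the current dual corresponds to contracting a loop in the primal, which is not a legal contraction). Until the rescheduling argument and these degenerate cases are carried out, the backward implication remains a plan rather than a proof.
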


This equivalence is used to reduce the problem of finding a contraction in a planar graph 
$G$ to finding an embedded topological minor in its dual graph $G^*$. This consequently leads to the main result of that paper.

\begin{theorem}[Theorem 12 in \cite{KaminskiPT10}]\label{thm-main-poly}
For every graph $H$, there exists a polynomial-time algorithm that given a planar graph $G$ decides whether $H$ is a contraction of $G$, and if so finds a series of contractions transforming $G$ into $H$. 
\end{theorem}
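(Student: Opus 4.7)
The plan is to use the dual equivalence between embedded contraction and embedded topological minor (Lemma~\ref{lem-ec-etm}) to reduce the contraction question to a topological-minor question in the dual graph, and then to solve the latter by exhaustive search over branch vertices combined with a bounded disjoint-paths routine. Because $H$ is fixed, the number of combinatorially distinct embeddings of $H$, and hence the collection of possible duals $H^{*}$, is a constant depending only on $|V(H)|$.

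First I would observe that abstract contraction can be realised in any chosen embedding. Concretely, if $H \leq_c G$ via an abstract sequence $G=G_0,G_1,\ldots,G_k=H$, then after fixing any embedding $\tilde G$ of $G$ and performing these contractions as embedded contractions one obtains some embedding $\tilde H$ of $H$ with $\tilde H \leq_{ec} \tilde G$; the converse is immediate. So I would fix any one embedding $\tilde G$ of $G$, compute its dual $\tilde G^{*}$, and loop over the (finitely many) embeddings $\tilde H$ of $H$, computing each corresponding plane dual $\tilde H^{*}$. By Lemma~\ref{lem-ec-etm}, the original question becomes: does at least one $\tilde H^{*}$ occur as an embedded topological minor of $\tilde G^{*}$?

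To decide the embedded topological minor question for the fixed pattern $\tilde H^{*}$, I would guess the branch vertices together with their rotation data and then route the path-edges. There are $|V(G)|^{\bigO{|V(H)|}}$ mappings $\phi:V(\tilde H^{*})\to V(\tilde G^{*})$; for each such $\phi$ and each choice (a constant number of them) of $\deg(v)$ cyclically ordered edges at $\phi(v)$ matching the rotation of $v$ in $\tilde H^{*}$, it remains to check that the prescribed pairs of incident edges can be linked by internally vertex-disjoint paths in $\tilde G^{*}$. The latter is a bounded disjoint-paths instance with $|E(\tilde H^{*})|$ demands, which is polynomial-time solvable in planar graphs. When some guess succeeds, passing back through Lemma~\ref{lem-ec-etm} recovers a partition of $V(G)$ into connected bags, one per vertex of $H$, and contracting each bag in turn produces the required sequence of edge contractions transforming $G$ into $H$.

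The main technical obstacle I foresee is that the \emph{embedded} aspect of embedded topological minor is not automatic from the standard topological-minor routines: one must enforce that the paths leaving each branch vertex respect the cyclic ordering prescribed by $\tilde H^{*}$, because it is exactly this rotation data that makes Lemma~\ref{lem-ec-etm} applicable in the reverse direction. Once that bookkeeping is handled (either by the explicit guess of a rotation-respecting tuple of incident edges as above, or by a small local gadget at each image branch vertex), the whole procedure runs in polynomial time in $|V(G)|$ for every fixed $H$, and this is precisely the statement of Theorem~\ref{thm-main-poly}.
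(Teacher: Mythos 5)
This statement is quoted from \cite{KaminskiPT10} and is not reproved in the present paper, so the comparison below is with the argument of that cited work, whose overall route (fix an embedding of $G$, dualize via Lemma~\ref{lem-ec-etm}, and test for embedded topological minors by guessing branch vertices with rotation data and routing disjoint paths) your proposal correctly identifies. However, your first step contains a genuine gap, and it is precisely where the real difficulty of that paper lies. You claim that if $H\leq_c G$ abstractly, then performing the same contraction sequence inside a fixed embedding $\tilde G$ yields an \emph{embedding of the simple graph $H$}, so that it suffices to enumerate the finitely many plane embeddings of $H$ and their duals. This is false. An embedded contraction deletes a parallel edge only when the pair bounds a $2$-face, whereas an abstract contraction deletes \emph{all} parallel edges; realizing an abstract contraction sequence inside $\tilde G$ therefore produces a \emph{thin plane multigraph} whose underlying simple graph is $H$, and this multigraph can genuinely retain parallel edges. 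Concretely, let $G=K_5$ minus an edge $xy$ (the triangular bipyramid on $u,v,w,x,y$, with $x$ inside and $y$ outside the separating triangle $uvw$; this graph is $3$-connected, hence has a unique embedding) and let $H=K_4$ minus an edge. Then $H\leq_c G$ by contracting $uv$, but in the embedding the two former edges $uw$ and $vw$ become parallel edges that do not bound a $2$-face (one region between them contains $x$, the other contains $y$), so both survive every embedded contraction; one checks that no embedding of the simple graph $H$ is an embedded contraction of $\tilde G$. Your algorithm as described would answer ``no'' on this yes-instance.

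The repair, which is what \cite{KaminskiPT10} actually does, is to enumerate not the embeddings of $H$ but all thin plane multigraphs $M$ whose underlying simple graph is $H$. By an Euler-formula argument such a multigraph on $|V(H)|$ vertices has at most $3|V(H)|-6$ edges, so there are still only boundedly many candidates in terms of $|V(H)|$, and Lemma~\ref{lem-ec-etm} applies to each of them since it is stated for thin graphs. With that correction the completeness direction holds: any witness partition of $V(G)$, realized in the fixed embedding, yields such an $M$ with $M\leq_{ec}\tilde G$, and conversely any such $M$ certifies $H\leq_c G$ after simplification. The remainder of your plan (rotation-respecting choice of branch edges followed by a bounded disjoint-paths computation, giving $|V(G)|^{\bigO{|V(H)|}}$ time, which is polynomial for fixed $H$) is sound and consistent with the cited proof; your own caveat about enforcing the cyclic orders at branch vertices is exactly the bookkeeping needed to make the reverse direction of Lemma~\ref{lem-ec-etm} applicable.
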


\section{Our motivation and results}

Theorem \ref{thm-main-poly} tells us that for every graph $H$, one can decide in polynomial time if a planar graph can be contracted to $H$. However, the running time of the algorithm, as mentioned in  \cite{KaminskiPT10}, is bounded by a polynomial whose degree depends on the size of $H$. Therefore, this is not an \FPT~algorithm, when parameterized by $|H|$. 



Recently, Grohe et al.~\cite{GKMW} announced that topological minor testing is \FPT.
We emphasize that due to the difference between embedded topological minors and topological minors their
result does not imply that contraction testing is \FPT\ in planar graphs; the latter problem is still open.

In this paper, given the duality of Theorem \ref{thm-main-poly}, we focus on contractions in planar graphs and identify a class of patterns for which an \FPT~algorithm exists. 
We show that the problem of testing whether a planar graph $G$ can be contracted
to a triangulated planar graph $H$ is equivalent to testing whether the dual of $G$ contains the dual of $H$ as a topological minor; so for such cases embedded topological minors and topological minors coincide.
This means that we could use the result of Grohe et al. to immediately find that 
contraction testing is \FPT\ in planar graphs as long as the pattern graphs are triangulated.
However, we aim for a stronger result.
Let $\cal C$ be the closure of the class of triangulated planar graphs with respect to taking of contractions.
We present an FPT algorithm that tests if a planar graph can be contracted to a pattern
graph $H\in {\cal C}$.

Our approach is as follows.
We prove that for every graph $H \in {\cal C}$, there is a constant $c_H$ such that if $\tw{G} > c_H$, then $G$ contains $H$ as a contraction. 
Our FPT algorithm first checks if the tree-width of the input graph is large enough. If so, then the input graph contains $H$ as a contraction. Otherwise, we use the celebrated result by Courcelle~\cite{Courcelle90} to solve the problem on the class of graphs with bounded tree-width. 

We also study properties of $\cal C$ and provide a characterization of the class in terms of forbidden contractions.

\section{Algorithm}

\begin{definition}
Let $\cal T$ be the class of triangulated planar graphs and $\cal C$ the minimal contraction-closed class containing $\cal T$.
\end{definition}

Note that $\cal C$ is well-defined and unique. Indeed, suppose that there are two
different inclusion-minimal contraction-closed classes containing $\cal T$. Then, there exists a graph $G$ that belongs to one but not the other. However, $G$ is a contraction of some planar triangulated graph $G_T \in {\cal T}$. By definition, $G_T$ and all its contractions should belong to both classes; a contradiction. 

\begin{lemma}\label{l-pasting}
$\cal C$ is the closure of ${\cal T} \cup \{K_2\}$ with respect to pasting along vertices and edges.
\end{lemma}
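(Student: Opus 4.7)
Let $\mathcal{D}$ denote the closure of $\mathcal{T} \cup \{K_2\}$ under pasting along vertices and edges, so the claim asserts $\mathcal{C} = \mathcal{D}$. I prove the two inclusions separately.

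For $\mathcal{D} \subseteq \mathcal{C}$: Since $\mathcal{T} \subseteq \mathcal{C}$ by definition and $K_2 = K_3/e \in \mathcal{C}$, it suffices to show $\mathcal{C}$ is closed under pasting. Given $G_1, G_2 \in \mathcal{C}$, obtained as contractions of $T_1, T_2 \in \mathcal{T}$ respectively, and a paste $G = G_1 \cup G_2$ along an edge $uv$, I plan to construct a triangulated planar graph $T \in \mathcal{T}$ that contracts to $G$ as follows. Writing $U_i, V_i \subseteq V(T_i)$ for the preimages of $u$ and $v$ under $T_i \to G_i$, I first augment each $T_i$ by subdividing a face adjacent to an edge between $U_i$ and $V_i$ (inserting a new vertex joined to the three corners of that face and adjoining it to $U_i$). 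This preserves triangulation and connectivity of $U_i$, and ensures that $T_i$ contains a facial triangle $u_i v_i a_i$ with $u_i, a_i \in U_i$ and $v_i \in V_i$. Re-embedding each $T_i$ so that this triangle is the outer face, I paste by identifying $u_1 = u_2$ and $v_1 = v_2$ along their shared edge; the resulting outer face is the quadrilateral $a_1 u_1 a_2 v_1$, which I triangulate with the diagonal $a_1 a_2$. Because $a_1 \in U_1$ and $a_2 \in U_2$ both lie in the merged preimage of $u$ in $T$, the extra edge $a_1 a_2$ becomes a loop under contraction, so $T \in \mathcal{T}$ indeed contracts to $G$. The vertex-pasting case at $v$ is analogous: augment each $T_i$ so that $V_i$ contains a facial triangle with all three corners in $V_i$, paste along one of its edges, and triangulate the outer face with a diagonal that again lies inside the shared preimage.

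For $\mathcal{C} \subseteq \mathcal{D}$: Since $\mathcal{T} \subseteq \mathcal{D}$ and $\mathcal{C}$ is the minimal contraction-closed class containing $\mathcal{T}$, it suffices to show that $\mathcal{D}$ is contraction-closed. Given $G \in \mathcal{D}$ decomposed as a paste of pieces $G_1, \ldots, G_k \in \mathcal{T} \cup \{K_2\}$ and an edge $e \in G_i$, the problem reduces to showing $G_i/e \in \mathcal{D}$. The case $G_i = K_2$ is trivial (the contraction only trims a pendant piece off of $G$). For $G_i = T \in \mathcal{T}$, I would prove by induction on $|V(T)|$ that $T/e \in \mathcal{D}$, splitting on the number $c$ of common neighbors of the endpoints of $e = uv$. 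If $c = 2$ (only the two facial third-vertices), an edge count gives $|E(T/e)| = |E(T)| - 3 = 3(|V(T)| - 1) - 6$, so $T/e$ is planar with the maximum possible number of edges and is thus triangulated; by Lemma~\ref{lem-triangulated-are-3-conn} it is $3$-connected, hence in $\mathcal{T}$ (the small base cases $K_3$ and $K_4$ are handled directly). If $c > 2$, any additional common neighbor $x$ of $u$ and $v$ yields a separating triangle $\{u, v, x\}$, which splits $T$ along this triangle into strictly smaller triangulated pieces $T_{\mathrm{in}}$ and $T_{\mathrm{out}}$; applying the induction hypothesis to each, and observing that the triangle $\{u, v, x\}$ collapses to a single edge $wx$ (with $w$ the merged vertex) after contracting $e$, we conclude that $T/e$ is the paste of $T_{\mathrm{in}}/e$ and $T_{\mathrm{out}}/e$ along $wx$, hence in $\mathcal{D}$.

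The main obstacle I anticipate is the augmentation step in the first direction: one must simultaneously preserve triangulation, keep the preimages connected so the enriched $T_i$ still contracts to $G_i$, and crucially guarantee that the diagonal triangulating the outer face of the paste lies inside a single preimage so that it collapses under contraction. The second direction is more conceptual: the key insight is that separating triangles through $e$ are precisely what prevents $T/e$ from being triangulated, and each such triangle corresponds to an edge-pasting seam in the decomposition of $T/e$.
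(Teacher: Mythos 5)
Your proof is correct in substance but follows a genuinely different route from the paper's. For the inclusion of $\cal C$ in the pasting closure, the paper argues structurally: cut-sets of triangulations are triangles, contractions preserve the property that all cut-sets are cliques and never enlarge them, so the $3$-connected components of any contraction of a triangulation are triangulated and the remaining cut-sets are $K_1$'s and $K_2$'s. Your induction on single edge contractions, with the dichotomy ``exactly two common neighbours $\Rightarrow$ $T/e$ is again a maximal planar graph'' versus ``an extra common neighbour $\Rightarrow$ a separating triangle through $e$, split and recurse,'' proves the same thing more explicitly and in fact justifies a step the paper leaves implicit (why the $3$-connected pieces of a contracted triangulation are themselves triangulated, and why the seams are exactly the edges arising from collapsed separating triangles). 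For the reverse inclusion, the paper takes a minimal counterexample in the closure and builds an explicit triangulation contracting to it, with separate global constructions for a cut-vertex (arranging all the pieces around the cut-vertex and adding auxiliary vertices) and a cut-edge (stellating $4$-faces); you instead show directly that $\cal C$ is closed under a single paste of two members, by arranging each witnessing triangulation so that a facial triangle meets the relevant preimages, gluing along it, and triangulating the resulting quadrilateral outer face with one diagonal that collapses under the contraction---a cleaner local argument. Two routine points you gloss over but which do not break the argument: the case where the contracted edge is itself a pasting seam shared by several pieces (handled by contracting it in each piece and re-pasting at the merged vertex), and the degenerate case $K_3=K_4/e$, which lies in the closure only under the paper's literal definition of ``triangulated'' (every face incident with three vertices), a convention issue present in the paper itself.
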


\begin{proof}
For the forward implication, let us note that triangulated planar graphs are 3-connected by Lemma \ref{lem-triangulated-are-3-conn}. All cut-sets of a triangulated planar graph are isomorphic to the clique on 3 vertices. Let us observe that contractions of a graph whose cut-sets are cliques are graphs with clique cut-sets. Also, the size of the cut-set will not increase after contraction. 

Let $G \in {\cal C}$. By definition of $\cal C$, $G$ is a contraction of a triangulated planar graph. The maximal 3-connected components of $G$ are triangulated and the cut-sets of size less than 3 are isomorphic to complete graphs. Hence, $G$ can be obtained from triangulated graphs by pasting along vertices and edges.

For the backward implication, let $G$ be a minimal graph that belongs to the closure of $\cal T$ with respect to pasting along vertices and edges but is not a contraction of a triangulated planar graph. 

First, suppose that $G$ has a cut-vertex $v$ and the connected components of $G \setminus v$ are $C_1,\ldots, C_k$. Let $C'_1,\ldots, C'_k$ be triangulated planar graphs such $C_1,\ldots, C_k$ are their contractions respectively. Let us consider drawings of $C'_1,\ldots, C'_k$ such that $v$ belongs to the outerface in each of these drawings (clearly, such drawings exist). Let $x_i, y_i$ be the other two vertices of $C_i$, for $i=1,\ldots,k$, besides $v$, incident with the outerface. Now let us identify $v$ from different drawings of $C'_1,\ldots, C'_k$ in such a way that the cyclic order of vertices around $v$ is $x_1,y_1,\ldots, x_k,y_k$. Let us then add new vertices $w_1,\ldots,w_k$ to the drawing and make them all adjacent to $v$ and each $w_i$ adjacent to $y_i$ and $x_{i+1}$, for $k=1,\ldots,k-1$, and $w_k$ adjacent to $y_k$ and $x_1$. At the last step, we add a vertex $u$ adjacent to all $x_i,y_i,w_i$, for $i=1,\ldots,k$. It is easy to verify that  the graph we have created is planar and triangulated, and that $G$ is a contraction of that graph; a contradiction.

Second, we suppose that $G$ has a cut-edge $vw$. Let us consider a planar drawing of $G$ and put a new vertex in every 4-face of $G$. Next, we make the new vertices adjacent to all 4 vertices incident with that face. It is easy to verify that  the graph we have created is planar and triangulated, and that $G$ is a contraction of that graph; a contradiction.
\end{proof}

\begin{lemma}\label{lem-c-tm-for-3-conn}
Let $H$ be a triangulated planar graph and $H^*$ be its dual. For every planar graph $G$ and its dual $G^*$,
\[ H \leq_{c} G \iff H^* \leq_{tm} G^* \]
\end{lemma}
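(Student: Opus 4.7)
The plan is to reduce this statement to Lemma \ref{lem-ec-etm} (the embedded version) by exploiting the fact that the 3-connectedness of $H$ (and of $H^*$) collapses the distinction between embedded and abstract containment. First I would record two preliminary observations: triangulated planar graphs are 3-connected by Lemma \ref{lem-triangulated-are-3-conn}, and the planar dual of a 3-connected planar graph is 3-connected; consequently both $H$ and $H^*$ are 3-connected planar graphs, and by Lemma \ref{lem-3-conn-unique} each has a unique embedding up to combinatorial equivalence.

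For the forward direction, assume $H \leq_c G$. Fix any plane embedding of $G$ and replay the sequence of abstract contractions witnessing $H \leq_c G$ as a sequence of embedded contractions on this embedding (embedded contractions simply realize an abstract contraction within the current drawing, and on a simple planar graph they never change the isomorphism type of the result). The outcome is a plane graph $H'$ abstractly isomorphic to $H$. Because $H$ is 3-connected, $H'$ is combinatorially equivalent to the unique embedding of $H$, so $H \leq_{ec} G$. Applying Lemma \ref{lem-ec-etm} yields $H^* \leq_{etm} G^*$, and forgetting the embedding gives $H^* \leq_{tm} G^*$.

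For the backward direction, assume $H^* \leq_{tm} G^*$, so some subgraph $S \subseteq G^*$ is a subdivision of $H^*$. Fix the (unique) embedding of $G^*$ as a dual of a plane representative of $G$; it restricts to a plane embedding of $S$. Suppressing (dissolving) the degree-2 vertices of $S$ inside this embedding produces a plane graph abstractly isomorphic to $H^*$. Since $H^*$ is 3-connected, its embedding is unique, so this plane graph is combinatorially equivalent to $H^*$, which is exactly the statement $H^* \leq_{etm} G^*$. Lemma \ref{lem-ec-etm} then gives $H \leq_{ec} G$, and hence $H \leq_c G$.

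The main conceptual step is the second direction: one must argue that an arbitrary topological-minor witness $S \subseteq G^*$ automatically has the ``right'' embedding, rather than merely some planar embedding. This is where 3-connectedness of $H^*$ is essential; without it one could imagine a subdivision of $H^*$ sitting in $G^*$ in a way that corresponds, after dissolution, to a plane embedding not combinatorially equivalent to $H^*$'s canonical one, and the reduction to $\leq_{etm}$ would fail. Everything else (the translation between abstract and embedded contractions in the forward direction, and the application of Lemma \ref{lem-ec-etm}) is mechanical once the uniqueness of embeddings is in place.
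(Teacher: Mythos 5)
Your proof is correct and follows essentially the same route as the paper's: both directions reduce to Lemma~\ref{lem-ec-etm} after observing that the $3$-connectedness (hence unique embeddability) of $H$ and of $H^*$ makes $\leq_c$ coincide with $\leq_{ec}$ and $\leq_{tm}$ coincide with $\leq_{etm}$. You actually spell out more detail than the paper does (which simply asserts these coincidences), so nothing further is needed.
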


\begin{proof}
$H$ is a triangulated planar graph and has a unique embedding by Lemmas \ref{lem-triangulated-are-3-conn} and \ref{lem-3-conn-unique}. Hence, the contraction and embedded contraction relations coincide. As the dual of a planar triangulated graph also has a unique embedding, the embedded topological minor and  topological relations minor coincide. Thus, $H \leq_{ec} G \iff H \leq_{c} G$ and $H^* \leq_{etm} G^* \iff H^* \leq_{tm} G^*$ and the lemma follows from Lemma \ref{lem-ec-etm}.
\end{proof}

\begin{theorem}\label{thm-big-twd-triang}
A graph $H \in {\cal C}$ if and only if there exists a constant $c_H$ such that for every graph $G$, if $\tw{G} > c_H$, then $H \leq_c G$.
\end{theorem}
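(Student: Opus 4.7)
For the backward direction, I will exhibit a sequence of graphs in $\cal C$ whose tree-widths grow without bound. The triangulated grids $\Gamma_k$ all lie in $\cal T \subseteq \cal C$, and each contains the $k \times k$ grid $M_k$ as a spanning subgraph, so $\tw{\Gamma_k} \geq \tw{M_k}$, which tends to infinity with $k$. Picking $k$ so that $\tw{\Gamma_k} > c_H$, the assumed property yields $H \leq_c \Gamma_k$; since $\cal C$ is contraction-closed, $H \in \cal C$.

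The forward direction is where the real work lies. Assume $H \in \cal C$, so by definition of $\cal C$ there is a planar triangulated graph $H_T$ with $H \leq_c H_T$; by transitivity of $\leq_c$ it suffices to find a constant $c_H$ (depending only on $H_T$) such that $\tw{G} > c_H$ forces $H_T \leq_c G$. I would then pass to duals: by Lemma \ref{lem-c-tm-for-3-conn}, $H_T \leq_c G$ is equivalent to $H_T^* \leq_{tm} G^*$. The crucial structural observation is that because every face of $H_T$ is a triangle, every vertex of the dual $H_T^*$ has degree exactly $3$; in particular $H_T^*$ has maximum degree at most $3$.

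With the target $H_T^*$ in hand, I would chain the tree-width lemmas. Applying Lemma \ref{lem-twd-dual} to $G^*$ (using $G^{**} = G$) gives $\tw{G^*} \geq \tw{G} - 1$, so large tree-width of $G$ forces large tree-width of $G^*$. By Lemma \ref{lem-grid} this in turn forces $G^*$ to contain a large grid $M_m$ as a minor, and by Lemma \ref{lem-planar-in-grid} the graph $H_T^*$ is a minor of $M_{2n}$ for $n := |V(H_T^*)| + 2|E(H_T^*)|$. Choosing $c_H$ large enough so that the two inequalities compose (any $c_H \geq 12n - 3$ suffices), transitivity of the minor relation gives $H_T^*$ as a minor of $G^*$.

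The final and only delicate step is to promote this minor containment to a topological minor containment: this is where the bounded degree of $H_T^*$ is indispensable. The standard fact I would invoke is that whenever the target has maximum degree at most $3$, any minor model can be rearranged into a subdivision, so minor and topological minor containment coincide for such targets. Applied to $H_T^*$, this yields $H_T^* \leq_{tm} G^*$, hence $H_T \leq_c G$ by Lemma \ref{lem-c-tm-for-3-conn}, and finally $H \leq_c G$. I expect the promotion from minor to topological minor to be the only step deserving genuine care; everything else is assembly of lemmas already collected in the excerpt.
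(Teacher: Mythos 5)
Your proposal is correct and follows essentially the same route as the paper: the backward direction via a triangulated grid of large tree-width, and the forward direction by passing to the cubic dual $H_T^*$, chaining Lemmas \ref{lem-twd-dual}, \ref{lem-grid} and \ref{lem-planar-in-grid} to get a grid (hence $H_T^*$) minor in $G^*$, upgrading to a topological minor via the maximum-degree-3 equivalence, and returning through Lemma \ref{lem-c-tm-for-3-conn}. Your version is, if anything, slightly more explicit about the constant and about why triangulated grids have unbounded tree-width.
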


\begin{proof}
Let $H\in {\cal C}$.
For the forward implication we note that $H$ is by definition a contraction of a triangulated planar graph. Hence, there exists a triangulated planar graph $H_T$ such that $H \leq_c H_T$. For a graph $G$, we have that $H_T \leq_{c} G \iff H_T^* \leq_{tm} G^*$ by Lemma \ref{lem-c-tm-for-3-conn}. 

$H^*_T$ is cubic and as the dual of a triangulation also has a unique embedding into the sphere. The minor and topological minor relations are equivalent for graphs of maximum degree 3 (see Proposition 1.7.2, p.20, \cite{Diestel}), therefore $H_T \leq_{c} G \iff H_T^* \leq_{m} G^*$. 

However, every planar graph is a minor of some grid by Lemma~\ref{lem-planar-in-grid}. (As explained in \cite{Diestel}, ``To see this take a drawing of the graph, fatten its vertices and edges, and superimpose a sufficiently fine plane grid.'') Let $m_{H_T^*}$ be the minimum size of a grid that contains $H_T^* $ as a minor and $c_H > 6 m_{H_T^*} - 4$. Then, for every graph $G$ of tree-width at least $c_H$, the tree-width of its dual is at least $c_H - 1 > 6 m_{H_T^*} - 5$ by Lemmas \ref{lem-grid} and \ref{lem-twd-dual}. Therefore, $G^*$ contains $H_T^*$ as a minor and, as explained before, $G$ contains $H$ as a contraction. 

For the backward implication, we need to prove that if there exists a constant $c_H$ such that every planar graph G with $\tw{G} > c_H$ contains $H$ as a contraction, for some graph $H$, then $H \in {\cal C}$. To see this, choose $G$ to be a triangulated grid with $\tw{G} > c_H$. Then, $G$ contains $H$ as a contraction, so $H \in {\cal C}$.
\end{proof}

\begin{theorem}\label{thm-FPT}
For every graph $H \in {\cal C}$, there exists an algorithm deciding in time  $f(|V(H)|) \cdot |V(G)|^{\bigO{1}}$ whether a planar graph $G$ can be contracted to $H$.
\end{theorem}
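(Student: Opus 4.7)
The plan is to combine the structural dichotomy of Theorem~\ref{thm-big-twd-triang} with the meta-algorithms of Bodlaender (Lemma~\ref{lem-twd-linear}) and Courcelle (Lemma~\ref{lem-courcelle}). Given $H\in\mathcal{C}$, I first compute a constant $c_H$, depending only on $|V(H)|$, as guaranteed by Theorem~\ref{thm-big-twd-triang}. On input $G$ I then run Bodlaender's algorithm with parameter $c_H$. If $\tw{G}>c_H$, the answer is YES by Theorem~\ref{thm-big-twd-triang}; otherwise $G$ has tree-width at most $c_H$, and I solve the problem by Courcelle's theorem applied to an MSOL formula expressing ``$G$ is contractible to $H$''. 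Both subroutines are linear in $|V(G)|$, with their dependence on $H$ absorbed into a function $f(|V(H)|)$.

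For the first step I must verify that $c_H$ is not only existent but computable from $H$, with a bound depending only on $|V(H)|$. Unwinding the proof of Theorem~\ref{thm-big-twd-triang}, $c_H$ is determined from the minimum size $m$ of a grid that contains $H_T^*$ as a minor, where $H_T\in\mathcal{T}$ is any planar triangulation with $H\leq_c H_T$. Such an $H_T$ can be produced effectively from $H$ by the construction inside the proof of Lemma~\ref{l-pasting}: triangulate each $3$-connected block and then paste them, using the explicit gadgets for cut-vertices and cut-edges described there. This yields $|V(H_T)|$ bounded by a polynomial in $|V(H)|$, and Lemma~\ref{lem-planar-in-grid} then bounds $m$ by a function of $|V(H)|$, so $c_H$ can be set, for instance, to $12(|V(H_T^*)|+2|E(H_T^*)|)$.

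For the second step I need that, for each fixed pattern $H$ on vertices $\{h_1,\dots,h_t\}$, contractibility of $G$ to $H$ is MSOL-expressible on $G$. The sentence existentially quantifies $t$ monadic variables $X_1,\dots,X_t\subseteq V(G)$ and asserts that they partition $V(G)$, that each induced subgraph $G[X_i]$ is connected (a standard MSOL property), and that for every pair $i<j$ there is an edge of $G$ between $X_i$ and $X_j$ if and only if $h_ih_j\in E(H)$. The length of this formula depends only on $H$, which is exactly what Courcelle's theorem requires.

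The main (rather modest) obstacle is verifying that the implicit construction of $H_T$ in the proof of Lemma~\ref{l-pasting} can be carried out algorithmically on $H$ and yields $|V(H_T)|$ bounded by a computable function of $|V(H)|$; once this is in place, the overall running time is $f(|V(H)|)\cdot |V(G)|^{\bigO{1}}$, as required.
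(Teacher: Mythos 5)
Your proof follows essentially the same approach as the paper: test whether $\tw{G}>c_H$ with Bodlaender's algorithm, answer YES by Theorem~\ref{thm-big-twd-triang} if so, and otherwise apply Courcelle's theorem to an MSOL formula expressing contractibility to the fixed pattern $H$. Your extra care in making $c_H$ effectively computable from $H$ (via an explicit triangulation $H_T$) and in spelling out the witness-structure MSOL formula only fills in details the paper leaves implicit.
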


\begin{proof}
We can assume that $H$ is planar because planar graphs are closed under taking of contractions. (If $H$ is not planar, it is not a contraction of a planar graph.) Let $c_H$ be the constant from Theorem \ref{thm-big-twd-triang}. First, we test if $\tw{G} > c_H$ using the linear-time algorithm from Lemma \ref{lem-twd-linear}. If so, then by Theorem \ref{thm-big-twd-triang}, $G$ contains $H$ as a contraction. Otherwise, we use Lemma \ref{lem-courcelle}. We point out that testing whether a graph can be contracted to a fixed pattern is expressible MSOL using standard techniques (connectivity and adjacency are needed and both are expressible in MSOL).
\end{proof}

\section{Characterizations of $\cal C$}

In this section, we give two equivalent characterizations of $\cal C$. First, we show that a graph belongs to $\cal C$ if and only if it is a contraction of a triangulated grid. Then, we provide a characterization of $\cal C$ in terms of minimal forbidden contractions.  

\subsection{Triangulated grids}

\begin{lemma}\label{lem-traingulated-grid}
$G \in {\cal C}$ if and only if $G$ is a contraction of a triangulated grid. 
\end{lemma}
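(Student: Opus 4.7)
The proof splits naturally into two directions, and both of them are one short step away from results already established in the paper.

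For the backward direction $(\Leftarrow)$, my plan is to observe that every triangulated grid $\Gamma_k$ is itself a triangulated planar graph: each inner square face of $M_k$ is cut into two triangles by the added diagonal, and the outer face is triangulated by the fan of edges drawn from the corner $(k-1,k-1)$ to every other external vertex. Hence $\Gamma_k \in \mathcal{T}\subseteq \mathcal{C}$. Because $\mathcal{C}$ is contraction-closed by definition, any $G$ that is a contraction of some $\Gamma_k$ lies in $\mathcal{C}$.

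For the forward direction $(\Rightarrow)$, I would invoke Theorem \ref{thm-big-twd-triang} directly. Since $G \in \mathcal{C}$, that theorem supplies a constant $c_G$ such that every graph of tree-width strictly greater than $c_G$ contains $G$ as a contraction. It therefore suffices to exhibit a single triangulated grid $\Gamma_k$ with $\tw{\Gamma_k} > c_G$ and conclude $G \leq_c \Gamma_k$. For the tree-width lower bound I would use that $M_k$ is a spanning subgraph of $\Gamma_k$, so $\tw{\Gamma_k} \geq \tw{M_k}$, and the tree-width of the $k\times k$ grid grows without bound in $k$ (the standard estimate $\tw{M_k}=k$; this is exactly the fact tacitly used at the end of the proof of Theorem \ref{thm-big-twd-triang}). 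Choosing any $k$ large enough that $\tw{\Gamma_k} > c_G$ then completes the argument.

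There is no genuine obstacle here; the lemma is essentially a convenient repackaging of Theorem \ref{thm-big-twd-triang}. The only two small points I would want to spell out are (i) that $\Gamma_k$ really is a planar triangulation, which is a direct inspection of the construction, and (ii) that $\tw{\Gamma_k}$ is unbounded in $k$, which follows from containing $M_k$ as a subgraph.
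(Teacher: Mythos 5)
Your proof is correct, but the forward direction takes a genuinely different route from the paper's. The paper argues directly: given $G\in{\cal C}$ with $G\leq_c H_T$ for some triangulated planar graph $H_T$, it uses Lemma~\ref{lem-planar-in-grid} (together with the facts that a grid is a minor of a sufficiently large wall and that minor and topological minor coincide for cubic graphs) to find a wall $\Gamma_k^*$ containing the cubic graph $H_T^*$ as a topological minor, and then dualizes via Lemma~\ref{lem-c-tm-for-3-conn} to get $H_T\leq_c\Gamma_k$, hence $G\leq_c\Gamma_k$ by transitivity. You instead treat Theorem~\ref{thm-big-twd-triang} as a black box and only need the additional observation that $\tw{\Gamma_k}\geq\tw{M_k}$ grows without bound --- a fact the paper itself tacitly uses in the backward implication of that very theorem, so nothing new is assumed, and there is no circularity since the forward implication of Theorem~\ref{thm-big-twd-triang} is proved without reference to this lemma. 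What each approach buys: yours is shorter and exposes the lemma as an immediate corollary of the tree-width theorem applied to the single planar graph $\Gamma_k$; the paper's is more self-contained and makes the wall/triangulated-grid duality explicit, which is the machinery actually powering both arguments (Theorem~\ref{thm-big-twd-triang} is itself proved with it). The backward direction is identical in both proofs, and your remark that $\Gamma_k$ is genuinely a planar triangulation is a detail worth spelling out that the paper leaves implicit.
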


\begin{proof}
For the forward implication, let us recall that by Lemma~\ref{lem-planar-in-grid}, for every planar graph $H$, there exists a constant $m_H$ such that $H$ is a minor of the $m_H \times m_H$ grid. In fact, one can easily see that a grid is a minor of some wall. Hence, for every planar graph $H$, there exists a constant $m'_H$ such that $H$ is a minor of the $\Gamma^*_{m'_H}$ wall. In particular, every cubic planar graphs is a minor of a big enough wall. Since the minor and topological minor relations are equivalent for graphs of maximum degree 3 (see Proposition 1.7.2, p.20, \cite{Diestel}), every cubic planar graphs is a topological minor of a big enough wall.

If $G \in {\cal C}$, then there exists a triangulated planar graph $H_T$ such that $H \leq_c H_T$. Let $G^*$ be a wall big enough that it contains $H_T^*$ as a topological minor ($H_T^*$ is cubic). From Lemma \ref{lem-c-tm-for-3-conn}, $H_T \leq_{c} G \iff H_T^* \leq_{tm} G^*$. Therefore, $H_T$ is a contraction of a triangulated grid.

For the backward implication, if $G$ is a contraction of a triangulated graph, then $G \in {\cal C}$ by the definition of $\cal C$.
\end{proof}

\subsection{Minimal forbidden contraction}

In this subsection, we characterize our class ${\cal C}$ in terms of minimal forbidden contractions. Such a characterization already exists for the class of planar
graphs as shown by Demaine, Hajiaghayi, and Kawarabayashi~\cite{DemaineHK09}.
Before we present their characterization (we need it for ours), we first 
define some graph terminology.
Let $\{a_1,a_2,a_3\}$ and $\{b_1,b_2,b_3\}$ denote the partition classes of 
$K_{3,3}$. Adding edge $\{a_1,a_2\}$ leads to the graph $K_{3,3}^1$,
adding edges $\{a_1,a_2\},\{a_2,a_3\}$ to the graph $K_{3,3}^2$ and 
adding edges $\{a_1,a_2\},\{a_2,a_3\}, \{a_1,a_3\}$ to the graph $K_{3,3}^3$.
The 5-vertex {\it wheel} $W_4$ is obtained from adding an extra 
vertex adjacent to every vertex of a $C_4$.

\begin{figure}
  \centering
     \scalebox{1}{\input{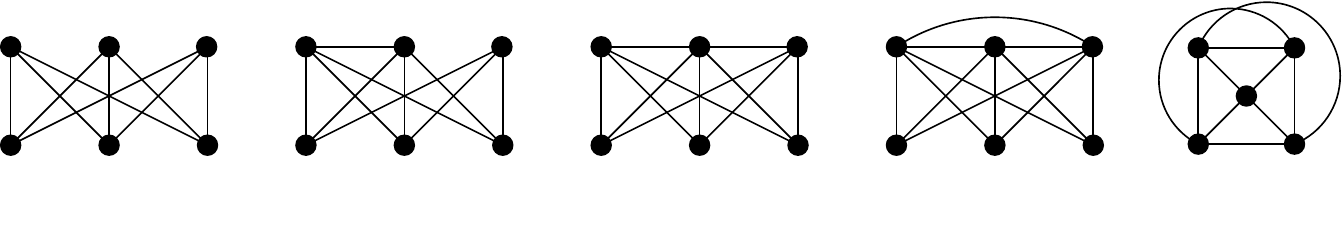_t}}
  \caption{The minimal forbidden contractions in Theorem~\ref{t-planar1}.}\label{f-planar1}
\end{figure}

 \begin{figure}
  \centering
    \scalebox{1}{\input{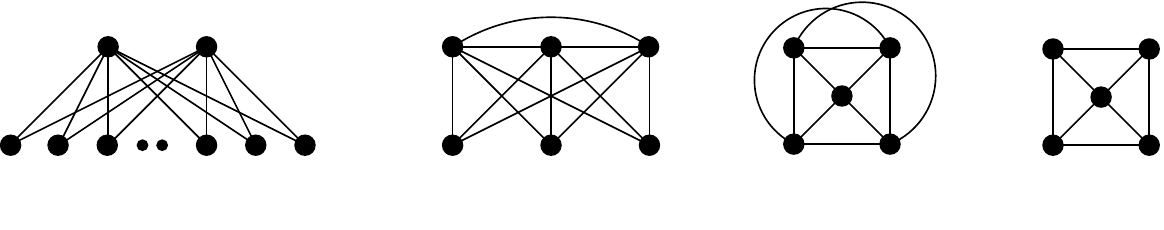_t}}
  \caption{The minimal forbidden contractions in Theorem~\ref{t-planar2}.}\label{f-planar2}
\end{figure}

We are now ready to present the two characterizations; also see Figures~\ref{f-planar1} and~\ref{f-planar2}.

\begin{theorem}[Corollary 29 in \cite{DemaineHK09}]\label{t-planar1}
A connected graph is planar if and only if it does not contain any
graph from $\{K_{3,3},K_{3,3}^1,K_{3,3}^2,K_{3,3}^3,K_5\}$ as a contraction.
\end{theorem}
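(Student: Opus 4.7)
The plan is to derive this characterization from Wagner's version of Kuratowski's theorem (planarity is characterized by forbidding $K_5$ and $K_{3,3}$ as minors) by strengthening ``minor'' to ``contraction'' using connectivity. The forward implication is easy: each of $K_{3,3}, K_{3,3}^1, K_{3,3}^2, K_{3,3}^3, K_5$ is non-planar (the last is classical, the others all contain $K_{3,3}$ as a subgraph), and since contractions preserve planarity, no planar graph can contract to any of them.

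For the backward implication, assume $G$ is connected and non-planar, so by Wagner it has $K_5$ or $K_{3,3}$ as a minor, witnessed by pairwise disjoint, connected branch sets $V_1,\ldots,V_k$ (where $k\in\{5,6\}$) together with, for every edge of the pattern, an edge of $G$ between the corresponding branch sets. I first extend these branch sets to a partition of $V(G)$: using connectivity of $G$, repeatedly pick an uncovered vertex adjacent to some current branch set $V_i$ and absorb it into $V_i$; this preserves connectivity of each $V_i$ and must terminate with every vertex covered. Contracting each resulting part to a single vertex yields a contraction $H$ of $G$ on $k$ vertices that contains the original $K_5$ or $K_{3,3}$ as a spanning subgraph, possibly with extra edges.

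If $k=5$, then any 5-vertex graph containing $K_5$ is itself $K_5$, and we are done. If $k=6$, then $H$ equals $K_{3,3}$ plus some edges inside the two sides $A=\{a_1,a_2,a_3\}$ and $B=\{b_1,b_2,b_3\}$, since every $A$--$B$ pair is already a $K_{3,3}$ edge. If all extra edges lie on a single side, then up to the $A$/$B$ symmetry $H$ is one of $K_{3,3}, K_{3,3}^1, K_{3,3}^2, K_{3,3}^3$ according as that side carries $0$, $1$, $2$, or $3$ of them, and we are done. Otherwise there exist extra edges $\{a_i,a_j\}\subseteq A$ and $\{b_k,b_l\}\subseteq B$; let $a_m$ and $b_n$ be the vertices of $A$ and $B$ not used by them. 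Contract the $K_{3,3}$-edge $\{a_m,b_n\}$ in $H$ to a new vertex $w$: in the resulting 5-vertex graph, $w$ is joined to $a_i, a_j$ (via the $K_{3,3}$-neighbors of $b_n$) and to $b_k, b_l$ (via those of $a_m$); the pairs $\{a_i, a_j\}$ and $\{b_k, b_l\}$ are joined through the two extra edges; and the four $\{a_x, b_y\}$ edges come from $K_{3,3}$. Thus the contraction equals $K_5$, and by transitivity of contraction $K_5 \leq_c G$.

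The forward direction is essentially immediate and the absorption step for extending branch sets is routine, so the substantive case is the ``mixed extra edges'' subcase. The key, slightly non-obvious move there is to contract the single $K_{3,3}$-edge joining the two vertices \emph{not} used by the chosen extra edges; the full $K_5$ then appears precisely because every would-be missing edge is supplied by either an extra edge or an original $K_{3,3}$ edge.
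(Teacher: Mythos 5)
The paper does not prove this statement at all -- it is imported verbatim as Corollary~29 of the cited Demaine--Hajiaghayi--Kawarabayashi paper -- so there is no in-paper argument to compare against. Your proof is correct and self-contained: the forward direction is immediate from the non-planarity of the five patterns and the fact that contraction preserves planarity, and the backward direction correctly lifts Wagner's theorem from minors to contractions by absorbing the uncovered vertices into the branch sets (which is where connectivity of $G$ is genuinely used) and then analysing the possible extra edges in the resulting $5$- or $6$-vertex quotient. The case analysis is complete: with $k=5$ any supergraph of $K_5$ on five vertices is $K_5$; with $k=6$ the extra edges lie inside the sides $A$ and $B$, one-sided configurations give exactly $K_{3,3},K_{3,3}^1,K_{3,3}^2,K_{3,3}^3$ up to the automorphisms of $K_{3,3}$, and in the mixed case contracting the $K_{3,3}$-edge between the two unused vertices does produce $K_5$, since every potentially missing pair is covered either by an original $K_{3,3}$-edge or by one of the two chosen extra edges. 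This is, as far as one can tell, essentially the standard route to the cited result, and I see no gap.
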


\begin{theorem}\label{t-planar2}
A graph belongs to ${\cal C}$ if and only if it does not contain any graph from $\{K_{2,r}\;|\; r\geq 2\}\cup \{K_{3,3}^3,K_5,W_4\}$ as a contraction. 
\end{theorem}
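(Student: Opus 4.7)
For the forward direction, since $\cal C$ is closed under contractions it suffices to verify that each of the forbidden graphs lies outside $\cal C$. Now $K_5$ and $K_{3,3}^3$ contain $K_5$, respectively $K_{3,3}$, as subgraphs and are therefore non-planar, while $\cal C$ consists only of planar graphs. The graph $W_4$ is $3$-connected planar with a unique embedding whose outer face has length $4$, so it is not triangulated; but the proof of Lemma~\ref{l-pasting} shows that every $3$-connected graph in $\cal C$ is triangulated. Finally, each $K_{2,r}$ ($r\ge 2$) has the non-adjacent $2$-cut $\{a_1,a_2\}$, and that same proof shows that every cut-set of size at most $2$ in a graph of $\cal C$ is a clique.

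For the backward direction I would argue by minimum counterexample: let $G$ be a connected graph with no forbidden contraction, $G\notin\cal C$, with $|V(G)|$ minimum. First, $G$ is planar, for otherwise Theorem~\ref{t-planar1} gives a contraction of $G$ onto some $H\in\{K_{3,3},K_{3,3}^1,K_{3,3}^2,K_{3,3}^3,K_5\}$; two are forbidden outright, and a direct check shows that contracting the edge $a_2b_1$ in any of $K_{3,3},K_{3,3}^1,K_{3,3}^2$ yields $W_4$ (the merged vertex becomes adjacent to the $4$-cycle $a_1b_2a_3b_3$, and no chord arises), whence $G$ already contains a forbidden contraction. Next, $G$ is $2$-connected: any cut-vertex $v$ splits $G$ into blocks $B_1,\dots,B_s$, each of which is a contraction of $G$ (collapse every other block into $v$), hence lies in $\cal C$ by minimality, so pasting along $v$ via Lemma~\ref{l-pasting} puts $G$ in $\cal C$. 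The same pattern handles $2$-cuts: if $\{u,v\}$ separates $G-\{u,v\}$ into components $C_1,\dots,C_r$ with $r\ge 2$, contracting each $C_i$ to a single vertex exhibits $K_{2,r}$ as a contraction of $G$ whenever $uv\notin E(G)$; hence $uv\in E(G)$ and pasting the induced subgraphs $G[C_i\cup\{u,v\}]$ along the edge $uv$ puts $G$ into $\cal C$.

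This leaves the case that $G$ is $3$-connected, and is the main obstacle. Since $G\notin\cal C$, Lemma~\ref{l-pasting} forces $G$ to be non-triangulated, so in its unique embedding $G$ has a face $F$ of length $k\ge 4$. It is enough to exhibit $W_4$ as a contraction of $G$. My plan is to pick four vertices $v_1,v_2,v_3,v_4$ on the boundary of $F$ in cyclic order, contract each of the four arcs of $F$ cut off by them into one of its endpoints so that $F$ collapses to the $4$-cycle $v_1v_2v_3v_4$, and then contract the induced subgraph on $V(G)\setminus\{v_1,v_2,v_3,v_4\}$ to a single vertex~$c$. For the outcome to be exactly $W_4$ one must arrange (i)~$G-\{v_1,v_2,v_3,v_4\}$ to be connected, (ii)~each $v_i$ to have a neighbour outside $\{v_1,\dots,v_4\}$, so that $c$ becomes the centre of the wheel, and (iii)~no chord $v_iv_j$ to survive the contraction. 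Verifying (i)--(iii) by a careful, $3$-connectedness-based choice of $v_1,\dots,v_4$ (and, when $k>4$, an initial reduction to the case $k=4$) is the technical heart of the proof. A cleaner alternative that I would try in parallel is to dualise via Lemma~\ref{lem-ec-etm}: since $W_4$ is self-dual and $G$ is $3$-connected, $W_4\le_c G\iff W_4\le_{tm}G^*$, which turns the question into a Menger-style problem of finding, in the $3$-connected planar graph $G^*$, four internally disjoint paths in $G^*-f$ that connect four cyclically consecutive neighbours of a vertex $f$ of degree $\ge 4$ in the correct cyclic order.
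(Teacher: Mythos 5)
Your forward direction and your reduction to the $3$-connected case are sound and structurally the same as the paper's proof: the paper likewise disposes of non-planarity via Theorem~\ref{t-planar1} (it contracts the edge $\{a_2,b_2\}$ where you contract $\{a_2,b_1\}$; both yield $W_4$ in the three cases $K_{3,3},K_{3,3}^1,K_{3,3}^2$), and it handles cut-vertices and $2$-cuts essentially as you do, by contracting the remaining pieces into the separator, invoking minimality, forcing the non-adjacent $2$-cut case into $K_{2,r}$, and reassembling with Lemma~\ref{l-pasting}. Your justification that each forbidden graph lies outside ${\cal C}$ is actually more explicit than the paper's, which merely asserts it.

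The genuine gap is the $3$-connected case, which you yourself flag as ``the technical heart'' and do not carry out: conditions (i)--(iii) are never verified, and it is not clear a priori that $W_4$ is always reachable. The paper's route is different and complete: it first uses minimality to normalize the long face to a face $C$ on exactly four vertices $z_1,\ldots,z_4$, and then argues a dichotomy --- if some component of $H-C$ is adjacent to all four $z_i$, contract it to a hub and absorb the remaining components to obtain $W_4$; if no component is adjacent to all four, obtain $C_4=K_{2,2}$ instead. So the correct target is ``$W_4$ \emph{or} $K_{2,2}$'', not $W_4$ alone, and your plan as stated would fail in the second branch. If you want to rescue the ``always $W_4$'' version, the missing ingredient is the classical fact that every facial cycle $C$ of a $3$-connected planar graph is induced and non-separating (Tutte): this gives (iii) ($C$ chordless), (i) ($G-V(C)$ connected), and (ii) (each vertex of the chordless cycle $C$ has degree at least $3$, hence a neighbour off $C$), so contracting $G-V(C)$ to a single vertex yields the wheel $W_k$ and then $W_4$ by contracting rim edges --- which also removes the need for your separate reduction from $k>4$ to $k=4$. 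Without either that fact or the paper's dichotomy the argument is incomplete; note also that your dual reformulation via Lemma~\ref{lem-ec-etm} merely restates the same difficulty rather than resolving it.
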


\begin{proof}
We first prove the ``$\Rightarrow$'' implication.
Suppose $H$ is a graph in ${\cal C}$. 
Because ${\cal C}$ is contraction-closed and every graph in
$\{K_{2,r}\;|\; r\geq 2\}\cup \{K_{3,3}^3,K_5,W_4\}$ does not belong
to ${\cal C}$, we find that $H$ cannot be contracted to such a graph.

We now prove the ``$\Leftarrow$'' implication. Let $H$ be a graph. Suppose 
$H$ does not contain any graph from $\{K_{2,r}\;|\; r\geq 2\}\cup \{K_{3,3}^3,K_5,W_4\}$ as a contraction. We must show that $H\in {\cal C}$. 
In order to derive a contradiction, suppose $H\notin {\cal C}$. We may
without loss of generality assume that $H$ is {\it minimal}, i.e., contracting an arbitrary edge in $H$ results in a graph $H'\in {\cal C}$.

\medskip
\noindent
{\it Claim 1. $H$ is planar.}

\medskip
\noindent
We prove Claim 1 as follows. Suppose $H$ is not planar. Then $H$ can
be contracted to a graph $F\in \{K_{3,3},K_{3,3}^1,K_{3,3}^2,K_{3,3}^3,K_5\}$ due to Theorem~\ref{t-planar1}.
By our assumptions, $F\notin \{K_{3,3}^3,K_5\}$. Hence, $F\in
\{K_{3,3},K_{3,3}^1,K_{3,3}^2\}$. However, in all these three cases, 
$F$ contains $W_4$ as a contraction by contracting the edge $\{a_2,b_2\}$.
This is not possible. Hence, we have proven Claim 1.

\medskip
\noindent
{\it Claim 2. $H$ is 3-connected.}

\medskip
\noindent
We prove Claim 2 as follows. Suppose $H$ is not 3-connected. Then $H$ either
contains a cut vertex $x$ or a cut set $\{x,y\}$ of size two. 

Consider the first case. Because $H\notin {\cal C}$,
there exists a component $D$ of $H-x$ such that $D+x\notin {\cal C}$; otherwise $H\in {\cal C}$ because of
Lemma~\ref{l-pasting}. We observe that $D+x$ contains no graph from
$\{K_{2,r}\;|\; r\geq 2\}\cup \{K_{3,3}^3,K_5,W_4\}$ as a contraction, 
because otherwise $H$ would contain such a graph as contraction as well, and this is not possible. Hence, we could take $D+x$ instead of $H$. This means that $H$ is not minimal, a contradiction.
 
Now consider the second case. We find that $H$ is not minimal either by the same kind of arguments as in the first case, unless $x$ and $y$ are not adjacent. However, in that case contracting the connected components of $H-\{x,y\}$ to a single vertex yields the graph $K_{2,r}$, where $r$ denotes the number of such components. This is not possible, and Claim 2 has been proven.

\medskip
\noindent
{\it Claim 3. $H$ allows an embedding with an outer face of exactly 4 vertices.}

\medskip
\noindent
We prove Claim 3 as follows. Because $H$ is 3-connected (by Claim 2) and $H\notin {\cal C}$, we find that $H$ is not triangulated. This means that $H$ contains a face
of at least 4 vertices. Without loss of generality we may assume that this face is the outer face. If it contains more than 4 vertices, we can contract one of its edges and obtain a graph $H'$ that contradicts the minimality of $H$.
This proves Claim 3.

\medskip
Now let $C$ be the outer face of $H$. By Claim 3, we may assume that $C$ consists of exactly 4 vertices $z_1,\ldots,z_4$.
Because $H$ is planar by Claim 1, $H-C$ contains at most one 
connected component adjacent to all four vertices of $C$. If this is the case we 
can contract this component to one single vertex $c$. We contract the other connected components of $H-C$ to single vertices as well. This leads to a graph in which the vertices $c,z_1,\ldots,z_4$ induce a $W_4$. We get rid of any 
remaining vertex $c'$ as follows. If $c'$ is adjacent to
$c$ then we contract the edge $\{c,c'\}$. Otherwise, $c'$ is adjacent to 
a vertex $z_i$ or two vertices $z_i,z_j$ of $C$, and in the latter case $z_i$ and $z_j$ are adjacent. We contract the edge
$\{c',z_i\}$. In this way we find that $H$ can be contracted to $W_4$. This is not possible.

If $H-C$ contains no component adjacent to all four vertices of $C$, then $H$
contains $C_4$ as a contraction by similar arguments as used above.
This completes the proof of Theorem~\ref{t-planar2}.
\end{proof}

We observe that none of the graphs $\{K_{2,r}\;|\; r\geq 2\}\cup \{K_{3,3}^3,K_5,W_4\}$ is a contraction of another. Thus, Theorem \ref{t-planar2} characterizes $\cal C$ in terms of minimal forbidden contractions.

\section{Conclusions}

\begin{enumerate}

\item This paper can be read as an introductory study of the class  $\cal C$. We define the class, discover some of its properties and provide an algorithmic application. The graphs that belong to $C$ are exactly those that are contractions of triangulated graphs, or equivalently, contractions of triangulated grids. We believe that $\cal C$ is an interesting class of graphs in its own right.

\item Note that membership in $\cal C$ can be tested in polynomial time. The input graph should be decomposed along vertex- and $K_2$-cuts and each component tested for being a planar triangulated graph. 

\item Lemma \ref{lem-traingulated-grid} tells us that a graph belongs to $\cal C$ if and only if it is a contraction of a triangulated grid. This could be viewed as an analog to the well-known statement that a graph is planar if and only if it is a minor of a grid. 

\item Theorem \ref{thm-big-twd-triang} tells us that a graph $H \in \cal C$ if and only if there exists a constant $c_H$ such that if the tree-width of a graph is at least $c_H$, it contains $H$ as a contraction.





\item The well-known theorem by Kruskal states that the set of trees over a well-quasi-ordered set of labels is itself well-quasi-ordered \cite{Kruskal60}. The set of triangulated planar graphs is known to be well-quasi ordered \cite{DemaineHK09}. Every graph in $\cal C$ can be seen as a tree (a decomposition tree with respect to vertex- and $K_2$-cuts) whose vertices are labelled by triangulated graphs. Hence, the graphs in $\cal C$ are well-quasi-ordered with respect to contractions. 

\end{enumerate}

{
\bibliographystyle{plain}
\bibliography{Contractions-in-planar-graphs-FPT}
}

\end{document}